\def\bh{\bf h}
\def\bPsi{\boldsymbol\Psi}
\def\bPhi{\boldsymbol\Phi}
\DeclareMathOperator{\Rea}{Re}
\DeclareMathOperator{\Ima}{Im}
\renewcommand{\leq}{\leqslant}
\renewcommand{\geq}{\geqslant}
\newtheorem{proposition}{Proposition}
\newtheorem{corollary}{Corollary}
\newtheorem{remark}{Remark}
\begin{document}
\large
\begin{center}
{\LARGE{\bf\sf The splitting in potential Crank-Nicolson scheme}}
\end{center}
\begin{center}
{\LARGE{\bf\sf with discrete transparent boundary conditions}}
\end{center}
\begin{center}
{\LARGE{\bf\sf for the Schr\"odinger equation on a semi-infinite strip}}
\vskip 0.5cm
{\large Bernard Ducomet,
\footnote{CEA, DAM, DIF, F-91297, Arpajon, France.
E-mail: \it{bernard.ducomet@cea.fr}}
{\large Alexander Zlotnik}
\footnote{Department of Higher Mathematics at Faculty of Economics,
National Research University Higher School of Economics,
Myasnitskaya 20, 101000 Moscow, Russia.
E-mail: \it{azlotnik2008@gmail.com}}
{\large  and Ilya Zlotnik}
\footnote{Department of Mathematical Modelling,
National Research University Moscow Power Engineering Institute,
Krasnokazarmennaya 14, 111250 Moscow, Russia.
E-mail: \it{ilya.zlotnik@gmail.com}}}
\end{center}
\vskip 0.5cm
\begin{abstract}
\noindent We consider an initial-boundary value problem for a generalized 2D time-dependent Schr\"odinger equation (with variable coefficients) on a semi-infinite strip. For the Crank-Nicolson-type finite-difference scheme with approximate or discrete transparent boundary conditions (TBCs), the Strang-type splitting with respect to the potential is applied. For the resulting method, the unconditional uniform in time $L^2$-stability is proved.
Due to the splitting, an effective direct algorithm using FFT is developed now to implement the method with the discrete TBC for general potential.
Numerical results on the tunnel effect for rectangular barriers are included together with the detailed practical error analysis confirming nice properties of the method.
\end{abstract}
\par MSC classification: 65M06, 65M12, 35Q40
\par Keywords:
the time-dependent Schr\"odinger equation,
the Crank-Nicolson finite-diffe\-rence scheme,
the Strang splitting,
approximate and discrete transparent boundary conditions,
stability, tunnel effect
\section{Introduction}
\label{sect1}
\large

\par The multidimensional time-dependent Schr\"odinger equation describes most of microscopic phenomena in non-relativistic quantum mechanics, atomic and nuclear physics and it also appears more generally in wave physics and nanotechnologies. Due to the physical framework (quantum mechanics) the corresponding initial value problem must be solved in unbounded space domains however, due to computational constraints, it is necessary to restrict the analysis to a bounded region which implies to solve the delicate problem of prescribing suitable boundary conditions.

\par In this work an additional complication is that we consider a generalized 2D time-dependent Schr\"odinger equation (GSE) with variable coefficients (when the Laplace operator is replaced by an operator of ``Laplace-Beltrami" type \cite{H74,RHR78,RDNPS78}) on a semi-infinite strip. This model appears in nuclear physics in the so-called Generator Coordinate Method (GCM) \cite{RS80} when one wants to describe microscopically large collective motions of nuclei and specifically low-energy nuclear fission dynamics \cite{BGG84,CBGW84,BGG91,GBCG05}.
%
\par Several approaches have been developed in order to solve
numerically such problems (see in particular, in the constant coefficients case \cite{AABES08,ABM04,AES03,DiM97,DZ06,DZ07,Sch02a,SA08,SZ04}).
One of them exploits the so-called discrete transparent boundary conditions (TBCs) on artificial boundaries \cite{EA01}.
Its advantages are the complete absence of spurious reflections, reliable computational stability, clear mathematical background and rigorous stability theory.

\par Concerning the discretization of the GSE, the Crank-Nicolson finite-difference scheme and more general schemes with the discrete TBCs in the case of a strip or semi-infinite strip was studied in detail in \cite{AES03,DZ06,DZ07,ZZ11,IZ11}.
However the scheme is implicit and solving a specific complex system of linear algebraic equations is required at each time level. In fact efficient methods to solve such systems are well developed by now in the real situation but not in the complex one.
Only the particular case of all the coefficients (including the potential) independent of the coordinate $y$ perpendicular to the strip can be effectively implemented \cite{ZZ11,IZ11}.
On the other hand, the splitting technique has been widely used to simplify the resolution of the time-dependent Schr\"o\-dinger and related equations (see in particular \cite{BM00,GX11,G11,L08a,L08,NT09}).

\par Our goal in this work is to apply the Strang-type splitting with respect to the potential to the Crank-Nicolson scheme with a sufficiently general approximate TBC in the form of the Dirichlet-to-Neumann map. The resulting method is more easy to implement but more difficult to study.
\par Developing the technique from \cite{DZ06}, we prove its unconditional uniform in time $L^2$-stability and conservativeness under a condition on an operator $\mathcal{S}$ in the approximate TBC.
To construct the discrete TBC, we are obliged to consider the splitting scheme on an infinite mesh in the semi-infinite strip. Its uniform in time $L^2$-stability together with the mass conservation law are proved. We find that an operator
$\mathcal{S}_{\rm ref}$ in the discrete TBC is the same as for the original Crank-Nicolson scheme in \cite{DZ06}, and it satisfies above mentioned condition so that the uniform in time $L^2$-stability of the resulting method is guaranteed.
The non-local operator
$\mathcal{S}_{\rm ref}$ is written in terms of the discrete convolution in time and the discrete Fourier expansion in direction $y$ perpendicular to the strip.
\par Due to the splitting, an effective direct algorithm using FFT in $y$ is developed to implement the method with the discrete TBC for \textit{general potential} (while other coefficients are $y$-independent). The corresponding numerical results on the tunnel effect for rectangular barriers are presented together with the detailed practical error analysis in the uniform in time and $C$ and $L^2$ in space norms confirming the good error properties of the splitting scheme. This conclusion is very important since other splittings are able to deteriorate the error behavior essentially, in particular, see \cite{ZaZ98,ZaZ99,ZI06}.
\par Finally we just mention that the previous results can be rather easily generalized to the case of a multidimensional parallelepiped infinite or semi-infinite in one of the space directions.
Also the case of higher order in space splitting schemes with the discrete TBCs for the classical Schr\"o\-dinger equation has been quite recently covered by another technique in \cite{DZR13,ZR13}.
\section{The Schr\"odinger equation on a semi-infinite strip
 and the splitting in potential Crank-Nicolson scheme with an approximate TBC}
\label{sect4}
\setcounter{equation}{0}
\setcounter{proposition}{0}
\setcounter{theorem}{0}
\setcounter{lemma}{0}
\setcounter{corollary}{0}
\setcounter{remark}{0}
Let us consider the generalized 2D time-dependent Schr\"odinger equation
\begin{equation}
 i\hbar\rho D_t\psi
 =({\mathcal H_0}+V) \psi\ \
 \text{for}\ \ (x,y)\in\Omega,\ \ t>0,
\label{sch2d}
\end{equation}
where $\Omega:=(0,\infty)\times (0,Y)$ is a semi-infinite strip, involving the 2D Hamiltonian operator
\[
 {\mathcal H}_0\psi:=-\frac{\hbar^{\,2}}{2}
\left[
  D_x(B_{11}D_x\psi)
 +D_x(B_{12}D_y\psi)
 +D_y(B_{21}D_x\psi)
 +D_y(B_{22}D_y\psi)
 \right].
\]
The real coefficients $\rho(x,y)$, ${\mathbf B}=\{B_{pq}(x,y)\}_{p,q=1}^2$ and $V(x,y)$ (the potential) are such that $\rho(x,y)\geq\underline\rho>0$ in $\Omega$ and the matrix ${\mathbf B}$ is symmetric and positive definite uniformly in $\Omega$.
Also $i$ is the imaginary unit, $\hbar>0$ is a physical
constant, $D_t$, $D_x$ and $D_y$ are partial derivatives, and the unknown wave function $\psi=\psi(x,t)$ is complex-valued.
\par We impose the following boundary condition, the condition at infinity and initial condition
\begin{gather}
 \psi(\cdot,t)|_{\partial\Omega}=0,\ \
 \|\psi(x,\cdot,t)\|_{L^2(0,Y)}\to 0\ \
 \text{as}\ \ x\to +\infty,
 \ \ \text{for any}\ \ t>0,
\label{bc2d}\\[1mm]
 \psi|_{t=0}=\psi^0(x,y)\ \ \text{in}\ \ \Omega.
 \label{ic2d}
\end{gather}
We also assume that
\begin{gather}
 B_{11}(x,y)=B_{1\infty}>0,\ \
 B_{12}(x,y)=B_{21}(x,y)=0,\ \
 B_{22}(x,y)=B_{2\infty}>0,\ \
\nonumber\\[1mm]
 \rho(x,y)=\rho_{\infty}>0,\ \ V(x,y)=V_{\infty},\ \
 \psi^0(x,y)=0\ \ \text{on}\ \ \Omega\backslash\Omega_{X_0},
\label{cc2d}
\end{gather}
 for some $X_0>0$, where $\Omega_{X}:=(0,X)\times (0,Y)$.
It is well-known that solution to problem \eqref{sch2d}-\eqref{cc2d} satisfies a non-local integro-differential TBC for any $x=X\geq X_0$ (for example see \cite{DZ06}) which we do not reproduce here.
\par Introduce a non-uniform mesh $\overline{\omega}^{\,\tau}$ in $t$ on $[0,\infty)$ with nodes
$0=t_0<\dots<t_m<\dots$, $t_m\to\infty$ as $m\to\infty$,
and steps $\tau_m:=t_m-t_{m-1}$.
Let $t_{m-1/2}=\frac{t_{m-1}+t_m}{2}$ and
$\omega^\tau:=\overline{\omega}^{\,\tau}\backslash\,\{0\}$.
In the differential case, {\it the splitting in potential method} can be represented as follows: three problems are solved sequentially step by step in time
\begin{gather}
 i\hbar\rho D_t\breve{\psi}=(\Delta V)\breve{\psi}\ \ \text{on}\ \ \Omega\times(t_{m-1},t_{m-1/2}],\ \
 \breve{\psi}|_{t=t_{m-1}}=\psi|_{t=t_{m-1}};
\label{s11}\\[1mm]
 i\hbar\rho D_t\widetilde{\psi}=({\mathcal H}_0+\widetilde{V})\widetilde{\psi}\ \ \text{on}\ \ \Omega\times(t_{m-1},t_m],\ \
 \widetilde{\psi}|_{t=t_{m-1}}=\breve{\psi}|_{t=t_{m-1/2}},
\label{s13}\\[1mm]
 \widetilde{\psi}|_{\partial\Omega}=0,\ \
 \|\widetilde{\psi}(x,\cdot,t)\|_{L^2(0,Y)}\to 0\ \
 \text{as}\ \ x\to\infty,
 \ \ \text{for}\ \ t\in (t_{m-1},t_m];
\label{s14}\\[1mm]
 i\hbar\rho D_t\psi=(\Delta V)\psi\ \ \text{on}\ \ \Omega\times(t_{m-1/2},t_m],\ \
 \psi|_{t=t_{m-1/2}}=\widetilde{\psi}|_{t=t_m},
\label{s15}\\[1mm]
\psi|_{t=0}=\psi^0\ \ \text{in}\ \ \Omega,
\label{s17}
\end{gather}
for any $m\geq 1$, where $\Delta V:=V-\widetilde{V}$ and $\widetilde{V}(x)$ is an auxiliary potential satisfying
\begin{gather}
 \widetilde{V}(x)=V_{\infty}\ \ \text{on}\ \ [X_0,\infty).
\label{cc2da}
\end{gather}
In the simplest case, $\widetilde{V}(x)=V_{\infty}$. But, in particular, to generalize results to the case of a strip and different constant values $V_{\pm\infty}$ of $V(x,y)$ at $x\to\pm\infty$, it is necessary to take non-constant $\widetilde{V}$; see also Section \ref{sectnum} below.
\par The Cauchy problems \eqref{s11} and \eqref{s15} can be easily solved explicitly, in particular,
\begin{gather}
 \breve{\psi}|_{t=t_{m-1/2}}
 =\exp\Bigl\{-i\frac{\tau_m}{2\hbar\rho}\Delta V\Bigr\}\,
 \psi|_{t=t_{m-1}},\ \
 \psi|_{t=t_m}=\exp\Bigl\{-i\frac{\tau_m}{2\hbar\rho}\Delta V\Bigr\}\,
 \widetilde{\psi}|_{t=t_m}.
\label{s17}
\end{gather}
Equation in \eqref{s13} is the original equation \eqref{sch2d} simplified by substituting $\widetilde{V}$ for $V$. Also $\breve{\psi}$ and $\widetilde{\psi}$ are auxiliary functions and $\psi$ is the main unknown one. This is a version of the Strang-type splitting \cite{L08} (though the original Strang splitting \cite{S68} was suggested with respect to space derivatives for the 2D transport equation; note that sharp error bounds for the Strang splitting for the 2D heat equation can be found in \cite{ZI06}). The symmetrized three-step form of this splitting ensures its second order of approximation for $\psi|_{t=t_m}$ with respect to $\tau_m$.
\par We turn to the fully discrete case. Fix some $X>X_0$ and introduce a non-uniform mesh $\overline{\omega}_{h,\infty}$
in $x$ on $[0,\infty)$ with nodes $0=x_0<\dots <x_J=X<\dots$
and steps $h_j:=x_j-x_{j-1}$ such that
$x_{J-2}\geq X_0$
and $h_j=h\equiv h_J$
for $j\geq J$.
Let $\omega_{h,\infty}:=\overline{\omega}_{h,\infty}\backslash\,\{0\}$,
$\overline{\omega}_{h}:=\{x_j\}_{j=0}^J$, $\omega_{h}:=\overline{\omega}_{h}\backslash\,\{0,X\}$
and $h_{j+1/2}:=\frac{h_j+h_{j+1}}{2}$.
\par We define the backward, modified forward and central difference quotients as well as two mesh averaging operators in $x$
\begin{gather*}
 \overline{\partial}_xW_j:= \frac{W_j-W_{j-1}}{h_j},\ \
 \widehat{\partial}_xW_j:= \frac{W_{j+1}-W_{j}}{h_{j+1/2}},\ \
 \overset{\circ}{\partial}_x W_j:= \frac{W_{j+1}-W_{j-1}}{2h_{j+1/2}},
\\[1mm]
 \overline{s}_xW_j=\frac{W_{j-1}+W_j}{2},\ \
 \widehat{s}_xW_j:=\frac{h_jW_j+h_{j+1}W_{j+1}}{2h_{j+1/2}}.
\end{gather*}
We define two mesh counterparts
of the inner product in the complex space $L^2(0,X)$:
\[
 \left(U, W\right)_{\omega_h}:=
 \sum_{j=1}^{J-1}
 U_jW_j^*h_{j+1/2},\ \
 \left(U, W\right)_{\overline{\omega}_h}:=
 \left(U, W\right)_{\omega_h}
 +U_JW_J^*\,\frac{h}{2}
\]
and the associated mesh norms $\|\cdot\|_{\omega_h}$ and
$\|\cdot\|_{\overline{\omega}_h}$ (of course, for mesh functions respectively
defined on $\omega_h$ or defined on $\overline{\omega}_h$ and equal zero at $x_0=0$).
Hereafter $z^*$, $\Rea z$ and $\Ima z$  denote the complex conjugate, the real and
the imaginary parts of $z\in {\mathbb C}$.
The above averaging operators are related by an identity
\begin{equation}
 \left(\widehat{s}_xW,U\right)_{\omega_h}=\sum_{j=1}^JW_j(\overline{s}_xU_j^*)h_j
 -\frac{1}{2}\,(W_1U_0^*h_1+W_JU_J^*h_J).
\label{smid}
\end{equation}
\par We also introduce a non-uniform mesh $\overline{\omega}_{\delta}$ in $y$ on $[0,Y]$ with nodes $0=y_0<\dots <y_K=Y$ and steps $\delta_k:=y_k-y_{k-1}$.
Let $\omega_{\delta}:=\overline{\omega}_{\delta}\backslash\{0,Y\}$.
We define the backward and the modified forward difference quotients together with
two mesh averaging operators in $y$
\[
\overline{\partial}_yU_k:= \frac{U_k-U_{k-1}}{\delta_k},\ \
 \widehat{\partial}_yU_k:= \frac{U_{k+1}-U_{k}}{\delta_{k+1/2}},\ \
 \overline{s}_yU_k=\frac{U_{k-1}+U_k}{2},\ \
 \widehat{s}_yU_k:=\frac{\delta_kU_k+\delta_{k+1}U_{k+1}}{2\delta_{k+1/2}},
\]
where $\delta_{k+1/2}:=\frac{\delta_k+\delta_{k+1}}{2}$.
Let $\overset{\circ}{H}(\overline{\omega}_\delta)$ be the space of
functions $U$: $\overline{\omega}_\delta\to \mathbb{C}$ such that $U|_{k=0,K}=0$,
equipped with the inner product
\[
 (U,W)_{\omega_\delta}:=\sum_{k=1}^{K-1}U_kW_k^*\delta_{k+1/2}
\]
and the associated norm $\|\cdot\|_{\omega_\delta}$.
\par We define the product 2D meshes
$\overline{\omega}_{\bf h,\infty}
:=\overline{\omega}_{h,\infty}\times\overline{\omega}_{\delta}$
on $\overline\Omega$ and
$\overline{\omega}_{\bh}:=\overline{\omega}_h\times\overline{\omega}_{\delta}$
on $\bar{\Omega}_X$
as well as their interiors $\omega_{\bf h,\infty}:=\omega_{h,\infty}\times\omega_{\delta}$ and
$\omega_{\bh}:=\omega_h\times\omega_{\delta}$.
Let $\Gamma_{\bh}=\{(0,y_k),\,1\leq k\leq K-1\}\cup\{(x_j,0),(x_j,Y),\,0\leq j\leq J\}$ be a part of the boundary of $\overline{\omega}_{\bh}$.
\par Let $A_{-,\,jk}:=A(x_{j-1/2},y_{k-1/2})$, for all the coefficients $A=\rho,B_{pq},V$, with $x_{j-1/2}:=\frac{x_{j-1}+x_j}{2}$ and
$y_{k-1/2}:=\frac{y_{k-1}+y_k}{2}$.
We exploit the 2D mesh Hamiltonian operator
\[
 {\mathcal H}_{0\bf h} W:=-\frac{\hbar^{\,2}}{2}
 \left[
 \widehat{\partial}_x(B_{11\bh}\overline{\partial}_xW)
 +\widehat{\partial}_x\widehat{s}_y(B_{12\bh}\overline{s}_x\overline{\partial}_yW)
 +\widehat{s}_x\widehat{\partial}_y(B_{21\bh}\overline{\partial}_x\overline{s}_yW)
 +\widehat{\partial}_y(B_{22\bh}\overline{\partial}_y W)\right],
\]
where the coefficients are given by formulas
${B_{11\bh}}=\widehat{s}_y B_{11,-}$,
${B_{22\bh}}=\widehat{s}_x B_{22,-}$,
${B_{12\bh}}={B_{21h}}= B_{12,-}$.
We also set ${\rho_{\bh}}=\widehat{s}_x\widehat{s}_y\rho_{-}$,
$V_{\bh}=\widehat{s}_x\widehat{s}_yV_{-}$ and $\widetilde{V}_h=\widehat{s}_x\widetilde{V}_{-}$.
Actually this finite-difference discretization is a simplification of the bilinear finite element method for the rectangular mesh $\overline{\omega}_{\bh}$ (conserving, in particular, its
$L^2(\Omega)$ and $H^1(\Omega)$ optimal error bounds), see \cite{Z91}. Some other operators ${\mathcal H}_{0\bf h}$ could be also exploited.
\par We define also the backward difference quotient and an averaging in time
\[
 \overline{\partial}_t\Phi^m:= \frac{\Phi^m-\Phi^{m-1}}{\tau_m},\ \
 \overline{s}_t\Phi^m:= \frac{\Phi^{m-1}+\Phi^m}{2}.
\]
\par The following Crank-Nicolson-type scheme was studied in \cite{DZ06}
\begin{gather}
 i\hbar{\rho_{\bh}}\overline{\partial}_t\Psi^m
 =({\mathcal H}_{0\bf h}+V_{\bh}) \overline{s}_t\Psi^m\ \
 \text{on}\ \ \omega_{\bh},
\label{dse2d}\\[1mm]
 \Psi^m|_{\Gamma_{\bh}}=0,
\label{ds02d}\\[1mm]
 \left.\left\{
 \frac{\hbar^{\,2}}{2}B_{1\infty}\overline{\partial}_x \overline{s}_t\Psi
 -\frac{h}{2}
 \left[i\hbar{\rho_{\infty}}\overline{\partial}_t \Psi
 +\Bigl(\frac{\hbar^{\,2}}{2}B_{2\infty}\widehat{\partial}_y\overline{\partial}_y
  -V_{\infty}\Bigr)\overline{s}_t\Psi\right]
 \right\}^m\right|_{j=J}
 =\frac{\hbar^{\,2}}{2}B_{1\infty}{\mathcal S}^m\bPsi^m_J,
\label{dsJ2d}\\
 \Psi^0=\Psi^0_{\bh}\ \ \text{on}\ \ \overline{\omega}_{\bh},
\label{dsi2d}
\end{gather}
for any $m\geq 1$. Here the boundary condition \eqref{dsJ2d} is the general approximate TBC posed on $\omega_\delta$, with a linear operator ${\mathcal S}^m$ acting in the space of functions defined on $\omega_\delta\times\{t_l\}_{l=1}^m$, and $\bPsi^m_J=\{\Psi^1_{J\cdot},\ldots,\Psi^m_{J\cdot}\}$.
Also ${\Psi^0_{\bh}}_{jk}=\psi^0(x_j,y_k)$ (for definiteness) and thus
$\left.\Psi^0_{\bh}\right|_{j=J}=0$; we assume also that $\left.\Psi^0_{\bh}\right|_{j=J-1}=0$ and
the conjunction condition $\Psi^0_{\bh}|_{\Gamma_{\bh}}=0$ is valid.
\par Recall that the left-hand side in the approximate TBC \eqref{dsJ2d} has the form of the well-known 2D second order approximation to $\frac{\hbar^{\,2}}{2}B_{1\infty}D_x$ in the Neumann boundary condition (exploiting an $8$-point stencil in all the directions $x$, $y$ and $t$).
\par We write down the following Strang-type splitting in potential for the  Crank-Nicolson scheme \eqref{dse2d}-\eqref{dsi2d}
\begin{gather}
 i\hbar{\rho_{\bh}}\,\frac{\breve{\Psi}^m-\Psi^{m-1}}{\tau_m/2}
 =\Delta V_{\bh}\frac{\breve{\Psi}^m+\Psi^{m-1}}{2}\ \
 \text{on}\ \ (\omega_h\cup{x_J})\times\omega_{\delta},
\label{sd11}\\[1mm]
 i\hbar{\rho_{\bh}}\frac{\widetilde{\Psi}^m-\breve{\Psi}^m}{\tau_m}
 =(\mathcal{H}_{0{\bh}}+\widetilde{V}_h)\frac{\widetilde{\Psi}^m+\breve{\Psi}^m}{2}+F^m
 \ \ \text{on}\ \ \omega_{\bh},
\label{sd13}\\[1mm]
 i\hbar{\rho_{\bh}}\,\frac{\Psi^m-\widetilde{\Psi}^m}{\tau_m/2}
 =\Delta V_{\bh}\frac{\Psi^m+\widetilde{\Psi}^m}{2}\ \
  \text{on}\ \ (\omega_h\cup{x_J})\times\omega_{\delta},
\label{sd15}\\[1mm]
 \breve{\Psi}^m|_{\Gamma_{\bh}}=0,\ \ \widetilde{\Psi}^m|_{\Gamma_{\bh}}=0,\ \
 \Psi^m|_{\Gamma_{\bh}}=0,
\label{sd17}\\[1mm]
 \left.
 \left\{
 \frac{\hbar^{\,2}}{2}B_{1\infty}\overline{\partial}_x
 \frac{\widetilde{\Psi}^m+\breve{\Psi}^m}{2}
 -\frac{h}{2}
 \left[i\hbar{\rho_{\infty}}\frac{\widetilde{\Psi}^m-\breve{\Psi}^m}{\tau_m}
 +\Bigl(\frac{\hbar^{\,2}}{2}B_{2\infty}\widehat{\partial}_y\overline{\partial}_y
 -V_{\infty}\Bigr)\frac{\widetilde{\Psi}^m+\breve{\Psi}^m}{2}\right]
 \right\}\right|_{j=J}
 \nonumber\\[1mm]
 +\frac{h}{2}F^m|_{j=J}=\frac{\hbar^{\,2}}{2}B_{1\infty}{\mathcal S}^m\widetilde{\bPsi}^m_J
 \ \ \text{on}\ \ \omega_\delta,
\label{sd19}\\[1mm]
 \Psi^0=\Psi^0_{\bh}\ \ \text{on}\ \ \overline{\omega}_{\bh},
\label{sd21}
\end{gather}
for any $m\geq 1$, where $\Delta V_{\bh}:=V_{\bh}-\widetilde{V}_h$.
We have added the perturbation $F^m$ into \eqref{sd13} and \eqref{sd19} in order to study stability of the scheme below in more detail (we suppose that $F^m$ is given on $\overline{\omega}_{\bh}$ and $F^m|_{\Gamma_{\bh}}=0$).
\par Obviously equations \eqref{sd11} and \eqref{sd15}
are reduced to the explicit expressions
\begin{equation}
 \breve{\Psi}^m=\mathcal{E}^m\Psi^{m-1},\ \
 \Psi^m=\mathcal{E}^m\widetilde{\Psi}^m,\ \ \text{with}\ \
 \mathcal{E}^m
 :=\frac{1-i\displaystyle{\frac{\tau_m}{4\hbar\rho_{\bh}}}\Delta V_{\bh}}
        {1+i\displaystyle{\frac{\tau_m}{4\hbar\rho_{\bh}}}\Delta V_{\bh}},\ \ \text{on}\ \ (\omega_h\cup{x_J})\times\omega_{\delta}.
\label{sd23}
\end{equation}
The main finite-difference equation \eqref{sd13} is similar to the original one \eqref{dse2d} simplified by substituting $\widetilde{V}_h$ for $V_{\bh}$. Here $\breve{\Psi}$ and $\widetilde{\Psi}$ are auxiliary unknown functions and $\Psi$ is the main unknown one.
We have got the approximate TBC \eqref{sd19} by
substituting respectively
\[
 \frac{\widetilde{\Psi}^m+\breve{\Psi}^m}{2},\ \
 \frac{\widetilde{\Psi}^m-\breve{\Psi}^m}{\tau_m},\ \
 \widetilde{\bPsi}^m_J
\]
for $\overline{s}_t\Psi$, $\overline{\partial}_t \Psi$ and $\bPsi^m_J$  in the approximate TBC \eqref{dsJ2d}; but notice that since $\Delta V_{\bh}|_{j=J}=0$, actually
\begin{equation}
 \breve{\Psi}^m_{J\cdot}=\Psi^{m-1}_{J\cdot},\ \
 \Psi^m_{J\cdot}=\widetilde{\Psi}^m_{J\cdot}
 \ \ \text{on}\ \ \overline{\omega}_\delta\ \ \text{for}\ \ m\geq 1.
\label{sd24}
\end{equation}
\par Clearly the constructed splitting in potential scheme can be also considered as the Crank-Nicolson-type discretization in time and the same approximation in space for the above splitting in potential differential problem \eqref{s11}-\eqref{s17}.
\par Also note that inserting formulas \eqref{sd23} into equation \eqref{sd13} (for $F=0$) and excluding the auxiliary functions leads to the following equation for $\Psi$
\begin{equation}
 i\hbar{\rho_{\bh}}\,\frac{(\mathcal{E}^m)^{-1}\Psi^m-\mathcal{E}^m\Psi^{m-1}}{\tau_m}
 =(\mathcal{H}_{0{\bh}}+\widetilde{V}_h)
 \frac{\mathcal{E}^m\Psi^{m-1}+(\mathcal{E}^m)^{-1}\Psi^m}{2}
\label{sd25}
\end{equation}
or, in another form,
\begin{equation}
 \left[i\hbar{\rho_{\bh}}-\frac{\tau_m}{2}(\mathcal{H}_{0{\bh}}+\widetilde{V}_h)\right]
 (\mathcal{E}^m)^{-1}\Psi^m
 =\left[i\hbar{\rho_{\bh}}+\frac{\tau_m}{2}(\mathcal{H}_{0{\bh}}+\widetilde{V}_h)\right]
 \mathcal{E}^m\Psi^{m-1}.
\label{sd26}
\end{equation}
Equation \eqref{sd25} can be considered as a non-standard discretization for the Schr\"odinger equation \eqref{sch2d} whereas equation \eqref{sd26} can be viewed as a specific symmetric approximate factorization \cite{Ya71} with respect to the potential of the Crank-Nicolson equation \eqref{dse2d}.
\par We note that ${\mathcal E}^{-1}={\mathcal E}^*$ and write down
 ${\mathcal E}={\mathcal E}_R-2\tau i\hat{{\mathcal E}}_I$ with real ${\mathcal E}_R$ and $\hat{{\mathcal E}}_I$. Then
\begin{gather*}
 \frac{{\mathcal E}^*\Psi-{\mathcal E} \check{\Psi}}{\tau}
 ={\mathcal E}_R\bar{\partial}_t \Psi
 +4i\hat{{\mathcal E}}_I\overline{s}_t \Psi,
\ \
 \frac{{\mathcal E}^*\Psi+{\mathcal E} \check{\Psi}}{2}
 ={\mathcal E}_R\overline{s}_t \Psi
 +i\tau^2\hat{{\mathcal E}}_I\bar{\partial}_t \Psi.
\end{gather*}
Consequently we can rewrite equation (\ref{sd25}) as follows
\begin{gather*}
 i\hbar{\rho_{\bh}} {\mathcal E}_R\bar{\partial}_t \Psi
 =\mathcal{H}_{0{\bh}}({\mathcal E}_R\overline{s}_t \Psi
+i\tau^2\hat{{\mathcal E}}_I\bar{\partial}_t \Psi)
+({\mathcal E}_R\widetilde{V}_h+4\hbar{\rho_{\bh}}\hat{{\mathcal E}}_I)\overline{s}_t \Psi
+i\tau^2\hat{{\mathcal E}}_I\widetilde{V}_h\bar{\partial}_t \Psi.
\end{gather*}
After some calculations, this formulation implies that the approximation error of the discrete equation (\ref{sd25}) differs from the original one (\ref{dse2d}) by a term of the order $O\left(\tau_{\max}^2\right)$
(in particular, note that
${\mathcal E}_R=1+O(\tau_{\max}^2)$ and $4\hbar{\rho_{\bh}}\hat{{\mathcal E}}_I=\Delta V_{\bh}+O(\tau_{\max}^2)$) as $\tau_{\max}\to 0$.

\par Since the Cauchy problems \eqref{s11} and \eqref{s15} do not need necessarily a discretization in time,
to cover both formulas \eqref{s17} and \eqref{sd23}, below we also admit an expression
\begin{equation}
 \mathcal{E}^m
 =\exp\Bigl\{-i\frac{\tau_m}{2\hbar\rho_{\bh}}\Delta V_{\bh}\Bigr\}
\label{sd27}
\end{equation}
in \eqref{sd23}. Obviously in both cases $\mathcal{E}^{-1}=\mathcal{E}^*$ and $|\mathcal{E}|=1$.
\par We introduce two mesh counterparts
of the inner product in the complex space $L^2(\Omega_X)$:
\[
 \left(U, W\right)_{\omega_{\bh}}:=
 \sum_{j=1}^{J-1}\sum_{k=1}^{K-1}
 U_{jk}W_{jk}^*h_{j+1/2}\delta_{k+1/2},\ \
 \left(U, W\right)_{\overline{\omega}_{\bh}}:=
 \left(U, W\right)_{\omega_{\bh}}
 +\sum_{k=1}^{K-1}U_{Jk}W_{Jk}^*\frac{h}{2}\,\delta_{k+1/2}
\]
and the associated mesh norms $\|\cdot\|_{\omega_{\bh}}$ and
$\|\cdot\|_{\overline{\omega}_{\bh}}$.
\begin{proposition}
\label{p12d}
Let the operator ${\mathcal S}$ satisfy an inequality \cite{DZ06}
\begin{equation}
 \Ima\sum_{m=1}^M
 \left({\mathcal S}^m\bPhi^m,
 \overline{s}_t\Phi^{m}\right)_{\omega_{\delta}}\tau_m
 \geq 0\ \ \text{for any}\ M\geq 1,
\label{cs2d}
\end{equation}
for any function $\Phi$: $\overline{\omega}_\delta\times\overline{\omega}^{\,\tau}\to \mathbb{C}$ such that $\Phi^0=0$ and
$\left.\Phi\right|_{k=0,K}=0$, where $\bPhi^m=\{\Phi^1,\ldots,\Phi^m\}$.
Then, for a solution of the splitting in potential scheme \eqref{sd11}-\eqref{sd21},
the following stability bound holds
\begin{equation}
 \max_{0\leq m\leq M} \|\sqrt{\rho_{\bh}}\Psi^m\|_{\overline{\omega}_{\bh}}
 \leq
 \|\sqrt{\rho_{\bh}}\Psi_{\bh}^0\|_{\overline{\omega}_{\bh}}
 +\frac{2}{\hbar}\sum_{m=1}^M
 \left\|\frac{F^m}{\sqrt{\rho_{\bh}}}\right\|_{\overline{\omega}_{\bh}}\tau_m
 \ \ \text{for any}\ M\geq 1.
\label{sb2d}
\end{equation}
\end{proposition}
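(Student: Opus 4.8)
The plan is to reduce the whole scheme to an energy estimate for the single Crank--Nicolson step \eqref{sd13}--\eqref{sd19}, because the two half-steps \eqref{sd11}, \eqref{sd15} are explicit multiplications by the operator $\mathcal{E}^m$ with $|\mathcal{E}^m|=1$ pointwise. Since $\mathcal{E}^m$ acts on $(\omega_h\cup\{x_J\})\times\omega_\delta$, which carries all the nodes entering the norm $\|\cdot\|_{\overline\omega_{\bh}}$, and $\rho_{\bh}$ is unchanged by this pointwise unimodular factor, the half-steps conserve the weighted norm exactly: $\|\sqrt{\rho_{\bh}}\breve\Psi^m\|_{\overline\omega_{\bh}}=\|\sqrt{\rho_{\bh}}\Psi^{m-1}\|_{\overline\omega_{\bh}}$ and $\|\sqrt{\rho_{\bh}}\Psi^m\|_{\overline\omega_{\bh}}=\|\sqrt{\rho_{\bh}}\widetilde\Psi^m\|_{\overline\omega_{\bh}}$. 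Thus it suffices to control $\|\sqrt{\rho_{\bh}}\widetilde\Psi^m\|_{\overline\omega_{\bh}}$ in terms of $\|\sqrt{\rho_{\bh}}\breve\Psi^m\|_{\overline\omega_{\bh}}$ through the middle step.

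First I would take the inner product of \eqref{sd13} with the average $\Psi_+^m:=\frac{\widetilde\Psi^m+\breve\Psi^m}{2}$ over the interior $\omega_{\bh}$. Writing $\Psi_-^m:=\frac{\widetilde\Psi^m-\breve\Psi^m}{\tau_m}$ and using that $\rho_{\bh}$ is real, the elementary identity $\Rea(\rho_{\bh}\Psi_-^m,\Psi_+^m)_{\omega_{\bh}}=\frac{1}{2\tau_m}(\|\sqrt{\rho_{\bh}}\widetilde\Psi^m\|_{\omega_{\bh}}^2-\|\sqrt{\rho_{\bh}}\breve\Psi^m\|_{\omega_{\bh}}^2)$ shows that the imaginary part of the left-hand side produces the desired norm increment. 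On the right-hand side I would sum by parts in $x$ and $y$ using \eqref{smid} and its $y$-analogue; because $\mathbf B$ is real symmetric and $\widetilde V_h$ is real, the resulting bulk quadratic form $(\mathcal H_{0\bh}\Psi_+^m,\Psi_+^m)$ is real and so disappears upon taking the imaginary part, leaving only the boundary flux $\frac{\hbar^2}{2}B_{1\infty}\overline\partial_x\Psi_+^m|_{j=J}$ tested against $(\Psi_{+}^m)^*$ at $j=J$ and the forcing $\Ima(F^m,\Psi_+^m)_{\omega_{\bh}}$.

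The central step is to eliminate this flux via the approximate TBC \eqref{sd19}: I would take the inner product of \eqref{sd19} with $(\Psi_{+}^m)_J^*$ over $\omega_\delta$ and add it to the interior identity. The $V_\infty$ term is real, the $B_{2\infty}\widehat\partial_y\overline\partial_y$ term becomes a real nonnegative quadratic after summation by parts in $y$ (using $\Psi|_{k=0,K}=0$), and the $\frac h2\,i\hbar\rho_\infty\frac{\widetilde\Psi^m-\breve\Psi^m}{\tau_m}$ term at $j=J$ supplies exactly the half-weighted boundary contribution $\frac{h}{2}\delta_{k+1/2}$ that upgrades $\|\cdot\|_{\omega_{\bh}}$ to the full norm $\|\cdot\|_{\overline\omega_{\bh}}$; likewise the $\frac h2 F^m|_{j=J}$ term completes $(F^m,\Psi_+^m)_{\overline\omega_{\bh}}$. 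After taking imaginary parts this yields, for each $m$, an identity of the form
\begin{equation*}
 \frac{\hbar}{2\tau_m}\bigl(\|\sqrt{\rho_{\bh}}\widetilde\Psi^m\|_{\overline\omega_{\bh}}^2-\|\sqrt{\rho_{\bh}}\breve\Psi^m\|_{\overline\omega_{\bh}}^2\bigr)
 +\tfrac{\hbar^2}{2}B_{1\infty}\Ima(\mathcal S^m\widetilde{\bPsi}^m_J,(\Psi_+^m)_J)_{\omega_\delta}
 =\Ima(F^m,\Psi_+^m)_{\overline\omega_{\bh}}.
\end{equation*}
I would then invoke the crucial reduction \eqref{sd24}, namely $\breve\Psi^m_{J\cdot}=\Psi^{m-1}_{J\cdot}$ and $\widetilde\Psi^m_{J\cdot}=\Psi^m_{J\cdot}$, so that $(\Psi_+^m)_J=\overline{s}_t\Psi^m_J$ and $\widetilde{\bPsi}^m_J=\bPsi^m_J$. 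Multiplying by $2\tau_m/\hbar$, summing over $m=1,\dots,M$, and telescoping the norm differences via the half-step conservation above, the $\mathcal S$-sum becomes $\Ima\sum_m(\mathcal S^m\bPsi^m_J,\overline{s}_t\Psi^m_J)_{\omega_\delta}\tau_m\ge0$ by \eqref{cs2d} (applied to the trace $\Phi=\Psi_{J\cdot}$, which vanishes at $m=0$ and for $k=0,K$) and may be discarded, giving $\|\sqrt{\rho_{\bh}}\Psi^M\|_{\overline\omega_{\bh}}^2\le\|\sqrt{\rho_{\bh}}\Psi^0\|_{\overline\omega_{\bh}}^2+\frac2\hbar\sum_m\Ima(F^m,\Psi_+^m)_{\overline\omega_{\bh}}\tau_m$.

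The final, and I expect most delicate, step is the passage from this quadratic estimate to the linear bound \eqref{sb2d}. Bounding the forcing by Cauchy--Schwarz, $\Ima(F^m,\Psi_+^m)_{\overline\omega_{\bh}}\le\|F^m/\sqrt{\rho_{\bh}}\|_{\overline\omega_{\bh}}\,\|\sqrt{\rho_{\bh}}\Psi_+^m\|_{\overline\omega_{\bh}}$ and using $\|\sqrt{\rho_{\bh}}\Psi_+^m\|_{\overline\omega_{\bh}}\le\max_{0\le l\le M}\|\sqrt{\rho_{\bh}}\Psi^l\|_{\overline\omega_{\bh}}=:\mathcal M_M$, every partial estimate becomes $\|\sqrt{\rho_{\bh}}\Psi^{M'}\|_{\overline\omega_{\bh}}^2\le\|\sqrt{\rho_{\bh}}\Psi^0\|_{\overline\omega_{\bh}}^2+\frac2\hbar\mathcal M_M\,G$ for every $M'\le M$, with $G:=\sum_m\|F^m/\sqrt{\rho_{\bh}}\|_{\overline\omega_{\bh}}\tau_m$. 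Taking the maximum over $M'$ turns this into the quadratic inequality $\mathcal M_M^2\le\|\sqrt{\rho_{\bh}}\Psi^0\|_{\overline\omega_{\bh}}^2+\frac2\hbar\mathcal M_M\,G$, whose solution obeys $\mathcal M_M\le\|\sqrt{\rho_{\bh}}\Psi^0\|_{\overline\omega_{\bh}}+\frac2\hbar G$, which is precisely \eqref{sb2d}. The main obstacles are the bookkeeping in the two summations by parts --- ensuring that the $j=J$ flux and the $\frac h2$-weighted terms in \eqref{sd19} combine to reproduce exactly the boundary part of $\|\cdot\|_{\overline\omega_{\bh}}$ with the correct sign in front of the $\mathcal S$-term --- together with the reduction \eqref{sd24} that makes condition \eqref{cs2d} directly applicable despite its being summed over $m$ rather than pointwise in time.
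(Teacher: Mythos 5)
Your proposal is correct and follows essentially the same route as the paper's own proof: testing the middle equation \eqref{sd13} with $\frac{\widetilde{\Psi}^m+\breve{\Psi}^m}{2}$, summing by parts and absorbing the boundary flux via the approximate TBC \eqref{sd19}, using the unimodularity of $\mathcal{E}^m$ (the paper's equalities \eqref{p409}) and the trace identities \eqref{sd24} to make condition \eqref{cs2d} applicable, and finally resolving the resulting quadratic inequality in $\max_{0\le m\le M}\|\sqrt{\rho_{\bh}}\Psi^m\|_{\overline{\omega}_{\bh}}$. The only cosmetic difference is that the paper first derives its summation-by-parts identity \eqref{sumidm2d} for a general test function $W$ before specializing it, whereas you test directly with the average; the substance is identical.
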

\begin{proof}
We take
the $(\cdot, \cdot)_{\omega_{\bh}}$-inner-product of equation \eqref{sd13}
with a function $W$: $\overline{\omega}_{\bh}\to \mathbb{C}$ such that
$W|_{\Gamma_{\bh}}=0$. Then we sum the result by parts in $x$ and $y$
(using assumptions \eqref{cc2d} and \eqref{cc2da}), apply identity \eqref{smid} and a similar identity
with respect to $y$, exploit the approximate TBC \eqref{sd19} and
obtain an identity
\begin{gather}
i\hbar
 \Bigl(\rho_{\bh}\frac{\widetilde{\Psi}^m-\breve{\Psi}^m}{\tau_m}, W\Bigr)_{\overline{\omega}_{\bh}}
\nonumber\\[1mm]
 =\frac{\hbar^{\,2}}{2}\sum_{j=1}^J\sum_{k=1}^K
 \Bigl\{
  B_{11,-}
  \overline{s}_y\Bigl[\Bigl(\overline{\partial}_x
  \frac{\widetilde{\Psi}^m+\breve{\Psi}^m}{2}\Bigr)
  \overline{\partial}_xW^*
  \Bigr]
 +B_{12,-}\Bigl(\overline{s}_x\overline{\partial}_y
 \frac{\widetilde{\Psi}^m+\breve{\Psi}^m}{2}\Bigr)
 \overline{\partial}_x\overline{s}_yW^*
 \Bigr.
\nonumber
\end{gather}
\begin{gather}
\Bigl.
  +B_{21,-}\Bigl(\overline{\partial}_x\overline{s}_y
  \frac{\widetilde{\Psi}^m+\breve{\Psi}^m}{2}\Bigr)
  \overline{s}_x\overline{\partial}_yW^*
  +B_{22,-}\overline{s}_x
  \Bigl[\Bigl(\overline{\partial}_y
  \frac{\widetilde{\Psi}^m+\breve{\Psi}^m}{2}\Bigr)
  \overline{\partial}_yW^*\Bigr]
  \Bigr\}_{jk}h_j\delta_k
\nonumber\\[1mm]
 +\Bigl(\widetilde{V}_h\frac{\widetilde{\Psi}^m+\breve{\Psi}^m}{2}, W\Bigr)_{\overline{\omega}_{\bh}}
 +(F^m, W)_{\overline{\omega}_{\bh}}
 -\frac{\hbar^{\,2}}{2}B_{1\infty}
 \Bigl({\mathcal S}^m\widetilde{\bPsi}^m_J, W_{J\cdot}\Bigr)_{\omega_\delta}
\label{sumidm2d}
\end{gather}
for any $m\geq 1$, see \cite{DZ06} for more details.
\par The sesquilinear form on the right-hand side containing the  five terms with coefficients $\widetilde{B}_{pq}$ and $\widetilde{V}_h$ is Hermitian-symmetric. Thus choosing $W=\frac{\widetilde{\Psi}^m+\breve{\Psi}^m}{2}$ and separating the imaginary part of the result, we get
\begin{gather*}
 \frac{\hbar}{2\tau_m}
 \Bigl[\Bigl(\rho_{\bh}\widetilde{\Psi}^m,\widetilde{\Psi}^m\Bigr)_{\overline{\omega}_{\bh}}
 -\Bigl(\rho_{\bh}\breve{\Psi}^m,\breve{\Psi}^m\Bigr)_{\overline{\omega}_{\bh}}\Bigr]
\\[1mm]
 =\Ima \Bigl(F^m,\frac{\widetilde{\Psi}^m+\breve{\Psi}^m}{2}\Bigr)_{\overline{\omega}_{\bh}}
 -\frac{\hbar^{\,2}}{2}B_{1\infty}\Ima \Bigl({\mathcal S}^m\widetilde{\bPsi}^m_J,
 \frac{\widetilde{\Psi}^m_{J\cdot}+\breve{\Psi}^m_{J\cdot}}{2}\Bigr)_{\omega_\delta}.
\end{gather*}
Owing to \eqref{sd17}, \eqref{sd23} and \eqref{sd27} we have the pointwise equalities
\begin{equation}
 |\breve{\Psi}^m|=|\Psi^{m-1}|,\ \ |\Psi^m|=|\widetilde{\Psi}^m|\ \ \text{on}\ \ \overline{\omega}_{\bh}.
\label{p409}
\end{equation}
Also taking into account equalities \eqref{sd24}, we further derive
\begin{gather*}
 \frac{\hbar}{2\tau_m}
 \left(\|\sqrt{\rho_{\bh}}\Psi^m\|_{\overline{\omega}_{\bh}}^2
 -\|\sqrt{\rho_{\bh}}\Psi^{m-1}\|_{\overline{\omega}_{\bh}}^2\right)
\nonumber\\[1mm]
 =\Ima \Bigl(F^m,\frac{\widetilde{\Psi}^m+\breve{\Psi}^m}{2}\Bigr)_{\overline{\omega}_{\bh}}
 -\frac{\hbar^{\,2}}{2}B_{1\infty}\Ima \left({\mathcal S}^m\bPsi^m_J,
 \overline{s}_t\Psi_{J\cdot}^m\right)_{\omega_\delta}.
\end{gather*}
Multiplying both sides by $\frac{2\tau_m}{\hbar}$ and summing up the result over $m=1,\ldots,M$, we obtain
\begin{gather}
 \bigl\|\sqrt{\rho_{\bh}}\Psi^M\bigr\|_{\overline{\omega}_{\bh}}^2
 +\hbar B_{1\infty}\sum_{m=1}^M\Ima \left({\mathcal S}^m\bPsi^m_J,
 \overline{s}_t\Psi^m_{J\cdot}\right)_{\omega_\delta}\tau_m
\nonumber\\[1mm]
 =\bigl\|\sqrt{\rho_{\bh}}\Psi^0\bigr\|_{\overline{\omega}_{\bh}}^2
 +\frac{2}{\hbar}\sum_{m=1}^M\Ima \Bigl(F^m,\frac{\widetilde{\Psi}^m+\breve{\Psi}^m}{2}\Bigr)_{\overline{\omega}_{\bh}}\tau_m.
\label{p413}
\end{gather}
Applying inequality \eqref{cs2d} and then equalities \eqref{p409}, we get
\begin{gather*}
 \bigl\|\sqrt{\rho_{\bh}}\Psi^M\bigr\|_{\overline{\omega}_{\bh}}^2
 \leq\bigl\|\sqrt{\rho_{\bh}}\Psi^0\bigr\|_{\overline{\omega}_{\bh}}^2
\\[1mm]
 +\frac{1}{\hbar}\sum_{m=1}^M
 \left\|\frac{F^m}{\sqrt{\rho_{\bh}}}\right\|_{\overline{\omega}_{\bh}}\tau_m
 \Bigl(\max_{1\leq m\leq M}
 \bigl\|\sqrt{\rho_{\bh}}\widetilde{\Psi}^m\bigr\|_{\overline{\omega}_{\bh}}
 +\max_{1\leq m\leq M}
 \bigl\|\sqrt{\rho_{\bh}}\breve{\Psi}^m\bigr\|_{\overline{\omega}_{\bh}}\Bigr)
\\[1mm]
\leq\bigl\|\sqrt{\rho_{\bh}}\Psi^0\bigr\|_{\overline{\omega}_{\bh}}^2
 +\frac{2}{\hbar}\sum_{m=1}^M
 \left\|\frac{F^m}{\sqrt{\rho_{\bh}}}\right\|_{\overline{\omega}_{\bh}}\tau_m
 \max_{0\leq m\leq M}\bigl\|\sqrt{\rho_{\bh}}\Psi^m\bigr\|_{\overline{\omega}_{\bh}}.
\end{gather*}
This directly implies bound \eqref{sb2d}.
\end{proof}
\begin{corollary}
\label{cp12d}
Let condition \eqref{cs2d} be valid. Then the splitting in potential scheme \eqref{sd11}-\eqref{sd21}
is uniquely solvable.
\par In particular, for $F=0$ its solution satisfies an equality
\begin{equation}
 \max_{m\geq 0} \|\sqrt{\rho_{\bh}}\Psi^m\|_{\overline{\omega}_{\bh}}
 =\|\sqrt{\rho_{\bh}}\Psi_{\bh}^0\|_{\overline{\omega}_{\bh}}.
\label{eneq2d}
\end{equation}
\end{corollary}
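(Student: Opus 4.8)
The plan is to prove unique solvability by induction on the time level $m$, reducing at each step to the injectivity of a single finite-dimensional linear system, and then to deduce the energy equality \eqref{eneq2d} directly from the stability bound \eqref{sb2d}.

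First I would note that \eqref{sd11}-\eqref{sd21} is a time-marching process. Since $|\mathcal{E}^m|=1$, the operator $\mathcal{E}^m$ is pointwise invertible, so the relations \eqref{sd23} determine $\breve{\Psi}^m$ explicitly from $\Psi^{m-1}$ and $\Psi^m$ explicitly from $\widetilde{\Psi}^m$. Hence, assuming $\Psi^0,\dots,\Psi^{m-1}$ (together with the auxiliary functions and the boundary history) already computed, the only nontrivial task at level $m$ is to solve the middle block \eqref{sd13}, \eqref{sd17}, \eqref{sd19} for $\widetilde{\Psi}^m$. This is a square linear system: the unknowns $\widetilde{\Psi}^m_{jk}$ on $\omega_{\bh}$ together with $\widetilde{\Psi}^m_{Jk}$, $1\le k\le K-1$, are matched by equation \eqref{sd13} on $\omega_{\bh}$ and the approximate TBC \eqref{sd19} on $\omega_\delta$. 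For such a system unique solvability is equivalent to injectivity.

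To establish injectivity I would take the difference of two solutions sharing the same data; it satisfies the homogeneous system, i.e.\ \eqref{sd13} and \eqref{sd19} with $\breve{\Psi}^m=0$ and $F^m=0$, while in \eqref{sd19} the past history cancels, so that, by linearity of $\mathcal{S}^m$, its argument reduces to $\widetilde{\bPsi}^m_J=\{0,\dots,0,\widetilde{\Psi}^m_{J\cdot}\}$. Repeating verbatim the computation leading to \eqref{sumidm2d} (inner product of \eqref{sd13} with $W=\tfrac12\widetilde{\Psi}^m$, summation by parts, use of the TBC) and taking the imaginary part to annihilate the Hermitian form, I obtain
\[
 \frac{\hbar}{2\tau_m}\bigl\|\sqrt{\rho_{\bh}}\widetilde{\Psi}^m\bigr\|_{\overline{\omega}_{\bh}}^2
 =-\frac{\hbar^{\,2}}{2}B_{1\infty}\,\Ima\Bigl(\mathcal{S}^m\widetilde{\bPsi}^m_J,\tfrac12\widetilde{\Psi}^m_{J\cdot}\Bigr)_{\omega_\delta}.
\]
The crux is to show the right-hand side is $\le 0$. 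Here I would apply hypothesis \eqref{cs2d} to the test function $\Phi$ defined by $\Phi^l=0$ for $1\le l\le m-1$ and $\Phi^m=\widetilde{\Psi}^m_{J\cdot}$ (with $\Phi^0=0$ and $\Phi|_{k=0,K}=0$ inherited from \eqref{sd17}), taking $M=m$. By linearity of $\mathcal{S}^l$ every summand with $l<m$ vanishes, since there both $\bPhi^l$ and $\overline{s}_t\Phi^l$ are zero, so \eqref{cs2d} collapses to the single-step inequality $\Ima(\mathcal{S}^m\widetilde{\bPsi}^m_J,\tfrac12\widetilde{\Psi}^m_{J\cdot})_{\omega_\delta}\ge 0$. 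The displayed identity then forces $\|\sqrt{\rho_{\bh}}\widetilde{\Psi}^m\|_{\overline{\omega}_{\bh}}=0$; since $\rho_{\bh}\ge\underline\rho>0$ and \eqref{sd17} handles $\Gamma_{\bh}$, we get $\widetilde{\Psi}^m=0$ on $\overline{\omega}_{\bh}$. Thus the homogeneous system has only the zero solution, and unique solvability follows by induction on $m$. I expect this reduction of the cumulative positivity \eqref{cs2d} to a per-step inequality, exploiting both the linearity of $\mathcal{S}^m$ and the support of the test function, to be the only delicate point.

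Finally, for $F=0$ the energy equality \eqref{eneq2d} is immediate from the stability bound \eqref{sb2d} of Proposition~\ref{p12d}: it gives $\max_{0\le m\le M}\|\sqrt{\rho_{\bh}}\Psi^m\|_{\overline{\omega}_{\bh}}\le\|\sqrt{\rho_{\bh}}\Psi^0_{\bh}\|_{\overline{\omega}_{\bh}}$, while the left-hand side already contains the term $m=0$ equal to the right-hand side; hence equality holds for every $M$, and letting $M\to\infty$ yields \eqref{eneq2d}.
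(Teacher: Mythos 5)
Your proof is correct, but it handles the unique-solvability part by a somewhat different route than the paper. The paper's own proof is a one-liner: it applies the already-established a priori bound \eqref{sb2d} of Proposition \ref{p12d} to the difference of two hypothetical solutions of the whole scheme (which, by linearity of the scheme and of $\mathcal{S}$, solves the scheme with $\Psi^0_{\bh}=0$ and $F=0$), concluding that the difference vanishes identically; injectivity of the square per-level system then yields existence --- the same finite-dimensional observation you make. You instead prove injectivity level by level: you collapse the cumulative condition \eqref{cs2d} to the single-step inequality $\Ima\bigl(\mathcal{S}^m\{0,\dots,0,\widetilde{\Psi}^m_{J\cdot}\},\tfrac12\widetilde{\Psi}^m_{J\cdot}\bigr)_{\omega_\delta}\geq 0$ by testing with a $\Phi$ supported only at step $m$ (legitimate, since \eqref{cs2d} is assumed for arbitrary admissible $\Phi$, not just solution traces), and combine this with the homogeneous version of the energy identity \eqref{sumidm2d}, using that $\breve{\Psi}^m$ and the past history cancel in the difference. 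Your localized argument is longer but more self-contained: it isolates exactly what \eqref{cs2d} contributes at each time level and makes explicit the injectivity-implies-surjectivity step that the paper leaves implicit; the paper's route is more economical because the energy bookkeeping was already done once and for all in Proposition \ref{p12d}. Your treatment of the equality \eqref{eneq2d} --- the $m=0$ term inside the maximum forces equality in \eqref{sb2d} when $F=0$ --- coincides with the paper's.
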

\begin{proof}
The unique solvability follows from a priori bound \eqref{sb2d}, and equality \eqref{eneq2d} also is clear from \eqref{sb2d} for $F=0$.
\end{proof}
\begin{remark}
We emphasize that actually both the Crank-Nicolson scheme and the splitting in potential scheme can be similarly considered and studied not only for the strip geometry but for much more general unbounded domain $\Omega$ composed of rectangles with sides parallel to coordinate axes and having one or more separated semi-infinite strip outlets at infinity.
\end{remark}
\section{The splitting in potential Crank-Nicolson scheme on the infinite space mesh and the discrete TBC}
\label{sect5}
\setcounter{equation}{0}
\setcounter{proposition}{0}
\setcounter{theorem}{0}
\setcounter{lemma}{0}
\setcounter{corollary}{0}
\setcounter{remark}{0}
To construct the discrete TBC, it is required to consider the splitting in potential Crank-Nicolson scheme on the infinite space mesh
for the original problem \eqref{sch2d}-\eqref{cc2d} on the semi-infinite strip
\begin{gather}
 i\hbar{\rho_{\bh}}\,\frac{\breve{\Psi}^m-\Psi^{m-1}}{\tau_m/2}
 =\Delta V_{\bh}\frac{\breve{\Psi}^m+\Psi^{m-1}}{2}\ \
 \text{on}\ \ \omega_{{\bh},\infty},
\label{sdi11}\\[1mm]
 i\hbar{\rho_{\bh}}\frac{\widetilde{\Psi}^m-\breve{\Psi}^m}{\tau_m}
 =(\mathcal{H}_{0{\bh}}+\widetilde{V}_h)\frac{\widetilde{\Psi}^m+\breve{\Psi}^m}{2}+F^m
 \ \ \text{on}\ \ \omega_{{\bh},\infty},
\label{sdi13}\\[1mm]
 i\hbar{\rho_{\bh}}\,\frac{\Psi^m-\widetilde{\Psi}^m}{\tau_m/2}
 =\Delta V_{\bh}\frac{\Psi^m+\widetilde{\Psi}^m}{2}\ \
  \text{on}\ \ \omega_{{\bh},\infty},
\label{sdi15}\\[1mm]
 \breve{\Psi}^m|_{\Gamma_{\bh,\infty}}=0,\ \
 \widetilde{\Psi}^m|_{\Gamma_{\bh,\infty}}=0,\ \
 \Psi^m|_{\Gamma_{\bh,\infty}}=0,
\label{sdi17}\\[1mm]
 \Psi^0=\Psi^0_{\bh}\ \ \text{on}\ \ \overline{\omega}_{\bf h,\infty},
\label{sdi21}
\end{gather}
for any $m\geq 1$, where $\Gamma_{\bh,\infty}:=\overline{\omega}_{\bf h,\infty}\backslash\omega_{{\bh},\infty}$.
The perturbation $F^m$ is given on $\overline{\omega}_{{\bh},\infty}$ after setting $F^m|_{\Gamma_{\bh,\infty}}=0$; it is added to the right-hand side of the main equation \eqref{sdi13} once again to study stability in more detail.
\par Obviously once again equations \eqref{sdi11} and \eqref{sdi15} are reduced to explicit expressions \eqref{sd23} which are valid now on $\omega_{\bf h,\infty}$. We continue to admit expression \eqref{sd27} in \eqref{sd23}.
\par Let $H_{\bh}$ be a Hilbert space of mesh functions
$W$: $\overline{\omega}_{{\bh},\infty}\to\mathbb{C}$ such that
$W|_{\Gamma_{\bh,\infty}}=0$ and
$\sum_{j=1}^\infty \|W_{jk}\|_{\omega_\delta}^2<\infty$,
equipped with the inner product
\[
 (U,W)_{H_{\bh}}:=\sum_{j=1}^\infty\sum_{k=1}^{K-1} U_{jk}W_{jk}^*h_{j+1/2}\delta_{k+1/2}.
\]
\begin{proposition}
\label{p22d}
Let $F^m,\Psi_{\bh}^0\in H_{\bh}$ for any $m\geq 1$.
Then there exists a unique solution to the splitting in potential scheme \eqref{sdi11}-\eqref{sdi21}
such that $\Psi^m\in H_{\bh}$ for any $m\geq 0$, and the following stability bound holds
\begin{equation}
 \max_{0\leq m\leq M}\|\sqrt{\rho_{\bh}}\Psi^m\|_{H_{\bh}}
 \leq\|\sqrt{\rho_{\bh}}\Psi^0_{\bh}\|_{H_{\bh}}
 +\frac{2}{\hbar}\sum_{m=1}^M
 \left\|\frac{F^m}{\sqrt{\rho_{\bh}}}\right\|_{H_{\bh}}\tau_m \ \ \text{for any}\ \ M\geq 1.
\label{sb22d}
\end{equation}
\par Moreover, in the particular case $F=0$, the mass conservation law holds
\begin{equation}
 \|\sqrt{\rho_{\bh}}\Psi^m\|_{H_{\bh}}^2=
 \|\sqrt{\rho_{\bh}}\Psi^0_{\bh}\|_{H_{\bh}}^2
 \ \ \text{for any}\ \ m\geq 1.
\label{cl2d}
\end{equation}
\end{proposition}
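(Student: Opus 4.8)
The plan is to follow the pattern of Proposition \ref{p12d}, the essential simplification being that on the infinite mesh no artificial boundary is present, so the term containing $\mathcal{S}^m$ disappears entirely; the essential new difficulty is the solvability of an infinite linear system in the Hilbert space $H_{\bh}$. First I would reduce the three-step scheme to a single genuine linear solve. By \eqref{sd23} with \eqref{sd27}, the maps $\Psi^{m-1}\mapsto\breve{\Psi}^m=\mathcal{E}^m\Psi^{m-1}$ and $\widetilde{\Psi}^m\mapsto\Psi^m=\mathcal{E}^m\widetilde{\Psi}^m$ are pointwise multiplications by the unit-modulus factor $\mathcal{E}^m$, hence they preserve $H_{\bh}$ together with the weighted norm $\|\sqrt{\rho_{\bh}}\,\cdot\,\|_{H_{\bh}}$ and are obviously invertible. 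Thus everything reduces to solving the main equation \eqref{sdi13} for $\widetilde{\Psi}^m\in H_{\bh}$ given $\breve{\Psi}^m\in H_{\bh}$, which I rewrite as
\[
\Bigl[i\hbar\rho_{\bh}-\tfrac{\tau_m}{2}(\mathcal{H}_{0{\bh}}+\widetilde{V}_h)\Bigr]\widetilde{\Psi}^m=\Bigl[i\hbar\rho_{\bh}+\tfrac{\tau_m}{2}(\mathcal{H}_{0{\bh}}+\widetilde{V}_h)\Bigr]\breve{\Psi}^m+\tau_mF^m.
\]

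For existence and uniqueness I would study the sesquilinear form associated with the left-hand operator on $H_{\bh}$. Summing by parts in $x$ and $y$ (legitimate because all functions involved lie in $H_{\bh}$ and hence decay at infinity, while the coefficients are constant outside $\Omega_{X_0}$) and using that the matrix $\mathbf{B}$ is positive definite, the form $(\mathcal{H}_{0{\bh}}W,W)_{H_{\bh}}$ is real and nonnegative, and $\widetilde{V}_h$ is a bounded real multiplication. Consequently, for the diagonal,
\[
\Ima\Bigl(\bigl[i\hbar\rho_{\bh}-\tfrac{\tau_m}{2}(\mathcal{H}_{0{\bh}}+\widetilde{V}_h)\bigr]W,W\Bigr)_{H_{\bh}}=\hbar\|\sqrt{\rho_{\bh}}W\|_{H_{\bh}}^2\geq\hbar\underline{\rho}\,\|W\|_{H_{\bh}}^2,
\]
so the form is bounded and coercive and the complex Lax--Milgram theorem yields a unique $\widetilde{\Psi}^m\in H_{\bh}$; chaining the isometries $\mathcal{E}^m$ then gives a unique $\Psi^m\in H_{\bh}$ for every $m$ by induction on $m$ starting from $\Psi^0=\Psi^0_{\bh}\in H_{\bh}$.

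The stability bound \eqref{sb22d} I would obtain exactly as in Proposition \ref{p12d}. Taking the $(\cdot,\cdot)_{H_{\bh}}$-inner product of \eqref{sdi13} with $W=\frac{\widetilde{\Psi}^m+\breve{\Psi}^m}{2}$, summing by parts in $x$ and $y$ and discarding the now-vanishing boundary contributions, the Hermitian-symmetric part drops out upon taking the imaginary part, leaving
\[
\tfrac{\hbar}{2\tau_m}\bigl(\|\sqrt{\rho_{\bh}}\widetilde{\Psi}^m\|_{H_{\bh}}^2-\|\sqrt{\rho_{\bh}}\breve{\Psi}^m\|_{H_{\bh}}^2\bigr)=\Ima\Bigl(F^m,\tfrac{\widetilde{\Psi}^m+\breve{\Psi}^m}{2}\Bigr)_{H_{\bh}}.
\]
Since $|\mathcal{E}^m|=1$ furnishes the pointwise identities $|\breve{\Psi}^m|=|\Psi^{m-1}|$ and $|\Psi^m|=|\widetilde{\Psi}^m|$, the left side telescopes into $\frac{\hbar}{2\tau_m}(\|\sqrt{\rho_{\bh}}\Psi^m\|_{H_{\bh}}^2-\|\sqrt{\rho_{\bh}}\Psi^{m-1}\|_{H_{\bh}}^2)$; multiplying by $2\tau_m/\hbar$, summing over $m=1,\dots,M$, applying the Cauchy--Schwarz inequality to the $F$-term and bootstrapping against $\max_{0\le m\le M}\|\sqrt{\rho_{\bh}}\Psi^m\|_{H_{\bh}}$ produces \eqref{sb22d}. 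Setting $F=0$ collapses the identity to the mass conservation law \eqref{cl2d}.

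The hardest step is the infinite-mesh solvability: I must ensure that the summation by parts leaves no surviving boundary terms at infinity and that the coercivity estimate above holds genuinely in $H_{\bh}$ rather than merely on finite truncations, so that the Lax--Milgram argument applies to the full infinite system. Everything else is a faithful transcription of the finite-mesh argument of Proposition \ref{p12d} with the transparent-boundary term absent.
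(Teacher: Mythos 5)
Your proposal is correct, and the stability part is essentially the paper's own argument: the same test function $W=\frac{\widetilde{\Psi}^m+\breve{\Psi}^m}{2}$, the same use of the realness (Hermitian symmetry) of the form generated by $\overset{\circ}{\mathcal H}_{0\bh}+\widetilde V_h$ on $H_{\bh}$, the pointwise identities \eqref{p409} coming from $|\mathcal{E}^m|=1$, telescoping and summation to the identity \eqref{p513}, then Cauchy--Schwarz and the bootstrap against $\max_{0\leq m\leq M}\|\sqrt{\rho_{\bh}}\Psi^m\|_{H_{\bh}}$; setting $F=0$ in \eqref{p513} gives \eqref{cl2d} exactly as in the paper. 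Where you genuinely differ is the solvability step: the paper does not prove unique solvability of \eqref{sdi13} in $H_{\bh}$ directly but invokes the (much more general) Proposition 3 of \cite{IZ11}, whereas you give a self-contained argument, rewriting \eqref{sdi13} as $\bigl[i\hbar\rho_{\bh}-\frac{\tau_m}{2}(\overset{\circ}{\mathcal H}_{0\bh}+\widetilde V_h)\bigr]\widetilde\Psi^m=\bigl[i\hbar\rho_{\bh}+\frac{\tau_m}{2}(\overset{\circ}{\mathcal H}_{0\bh}+\widetilde V_h)\bigr]\breve\Psi^m+\tau_m F^m$ and using that the imaginary part of the diagonal of the left-hand form equals $\hbar\|\sqrt{\rho_{\bh}}W\|_{H_{\bh}}^2\geq\hbar\underline\rho\,\|W\|_{H_{\bh}}^2$, so a complex Lax--Milgram theorem (equivalently: $i\hbar\rho_{\bh}-\frac{\tau_m}{2}\mathcal{A}$ with $\mathcal{A}$ bounded and self-adjoint in $H_{\bh}$ is boundedly invertible) applies; induction over $m$ through the unitary multiplications $\mathcal{E}^m$ of \eqref{sd23}, \eqref{sd27} then finishes existence and uniqueness. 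Your route buys independence from \cite{IZ11} at essentially no extra cost, since the ingredients it needs --- boundedness and self-adjointness of $\overset{\circ}{\mathcal H}_{0\bh}$, justified by summation by parts with vanishing contributions at infinity for $H_{\bh}$-functions --- are stated in the paper anyway (with details in \cite{DZ06}) for the stability identity; the paper's citation, in turn, covers more general schemes than this particular Crank--Nicolson step. Two minor points of precision: realness of $(\overset{\circ}{\mathcal H}_{0\bh}W,W)_{H_{\bh}}$ follows from the symmetry $B_{12}=B_{21}$ alone (positive definiteness of $\mathbf B$ enters only in your unused nonnegativity claim, whose proof would additionally require a convexity argument for the averaged quotients), and the operator form of \eqref{sdi13} should be understood with the extension $\overset{\circ}{\mathcal H}_{0\bh}$ (set to zero on $\Gamma_{\bh,\infty}$), so that the interior equation and the boundary condition \eqref{sdi17} merge into one equation in $H_{\bh}$.
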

\begin{proof}
Given $\breve{\Psi}^m,F^m\in H_{\bh}$, there exists a unique solution
$\widetilde{\Psi}^m\in H_{\bh}$ to equation \eqref{sdi13}. The much more general result was established in the proof of the corresponding Proposition 3 in \cite{IZ11}.
Since expressions \eqref{sd23} are valid now on $\omega_{\bf h,\infty}$, this implies existence of a unique solution to the splitting scheme \eqref{sdi11}-\eqref{sdi21}
such that $\Psi^m\in H_{\bh}$ for any $m\geq 0$.
\par We set $\overset{\circ}{\mathcal H}_{0\bh}W:={\mathcal H}_{0\bh}W$
on $\omega_{\bf h,\infty}$ and $\overset{\circ}{\mathcal H}_{0\bh}W:=0$
on $\Gamma_{\bh,\infty}$.
The operator $\overset{\circ}{\mathcal H}_{0\bh}$ is bounded and self-adjoint in $H_{0\bh}$ since
\begin{gather*}
\bigl(\overset{\circ}{\mathcal H}_{0\bh}U,W\bigr)_{H_{\bh}}
 = \frac{\hbar^{\,2}}{2}
 \sum_{j=1}^\infty\sum_{k=1}^K
 \left\{
  B_{11,-}\overline{s}_y\left[(\overline{\partial}_xU)\,
  \overline{\partial}_xW^*\right]
 +B_{12,-}(\overline{s}_x\overline{\partial}_yU)\,
 \overline{\partial}_x\overline{s}_yW^*
 \right.
\nonumber\\[1mm]
\left.
 +B_{21,-}(\overline{\partial}_x\overline{s}_yU)\,
 \overline{s}_x\overline{\partial}_yW^*
 +B_{22,-}\overline{s}_x\left[(\overline{\partial}_yU)\,
 \overline{\partial}_yW^*\right]
 \right\}_{jk}h_j\delta_k
\end{gather*}
for any $U,W\in H_{\bh}$, see \cite{DZ06} for details.
\par From equation \eqref{sdi13} an identity
\begin{gather*}
i\hbar
 \Bigl(\rho_{\bh}\frac{\widetilde{\Psi}^m-\breve{\Psi}^m}{\tau_m}, W\Bigr)_{H_{\bh}}
 =\Bigl(\overset{\circ}{\mathcal H}_{\bh}
 \frac{\widetilde{\Psi}^m+\breve{\Psi}^m}{2},W\Bigr)_{H_{\bh}}
 +\Bigl(\widetilde{V}_h\frac{\widetilde{\Psi}^m+\breve{\Psi}^m}{2}, W\Bigr)_{H_{\bh}}
 +(F^m, W)_{H_{\bh}}
\end{gather*}
follows, for any $m\geq 1$ and $W\in H_{\bh}$. Acting in the spirit of the proof of Proposition \ref{p12d}, i.e. choosing $W=\frac{\widetilde{\Psi}^m+\breve{\Psi}^m}{2}$, separating the imaginary part of the result and applying the pointwise equalities \eqref{p409} that are valid now on $\overline{\omega}_{\bh,\infty}$, we get
\begin{gather*}
 \frac{\hbar}{2\tau_m}
 \left(\|\sqrt{\rho_{\bh}}\Psi^m\|_{H_{\bh}}^2
 -\|\sqrt{\rho_{\bh}}\Psi^{m-1}\|_{H_{\bh}}^2\right)
 =\Ima\Bigl(F^m,\frac{\widetilde{\Psi}^m+\breve{\Psi}^m}{2}\Bigr)_{H_{\bh}}.
\end{gather*}
Multiplying both sides by $\frac{2\tau_m}{\hbar}$ and summing up the result over $m=1,\ldots,M$, we obtain
\begin{gather}
 \bigl\|\sqrt{\rho_{\bh}}\Psi^M\bigr\|_{H_{\bh}}^2
 =\bigl\|\sqrt{\rho_{\bh}}\Psi^0\bigr\|_{H_{\bh}}^2
 +\frac{2}{\hbar}\sum_{m=1}^M
 \Ima\Bigl(F^m,\frac{\widetilde{\Psi}^m+\breve{\Psi}^m}{2}\Bigr)_{H_{\bh}}.
\label{p513}
\end{gather}
The rest of the proof in fact repeats one for Proposition \ref{p12d}.
\end{proof}
\begin{corollary}
\label{c2p22d}
Let $F^m=0$ and $\Psi_{\bh}^0=0$ on $\omega_{{\bh},\infty}\backslash\,\omega_{\bh}$,
for any $m\geq 1$.
If the solution to the splitting in potential scheme \eqref{sdi11}-\eqref{sdi21} is such that $\Psi^m\in H_{\bh}$, for any $m\geq 0$, and satisfies an equation
\begin{equation}
 \Bigl.\Bigl(\overset{\circ}{\partial}_x
 \frac{\widetilde{\Psi}^m+\breve{\Psi}^m}{2}\Bigr)\Bigr|_{j=J}=
 {\mathcal S}^m\widetilde{\bPsi}^m_J\ \ \text{on}\ \ \omega_\delta,\ \
 \text{for any}\ \ m\geq 1,
\label{dtbc2d}
\end{equation}
with some operator $\mathcal{S}^m=\mathcal{S}_{\rm ref}^m$,
then the following equality holds, for any $M\geq 1$
\begin{gather}
\hbar B_{1\infty}
 \Ima\sum_{m=1}^M\left({\mathcal S}_{\rm ref}^m\bPsi_J^m,
 \overline{s}_t\Psi_{J\cdot}^m\right)_{\omega_{\delta}}\tau_m
\nonumber
 =\|\Psi^M\|_{\omega_{{\bh},\infty}\backslash\,\omega_{\bh}}^2
 :=\frac{h}{2}\,\|\Psi_{J\cdot}^M\|_{\omega_\delta}^2
 +\sum_{j=J+1}^\infty\|\Psi_{j\cdot}^M\|_{\omega_\delta}^2h\geq 0.
\label{p515}
\end{gather}
\end{corollary}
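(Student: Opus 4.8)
The plan is to obtain \eqref{p515} as the difference of two energy balances: the global mass conservation law \eqref{cl2d} for the whole infinite scheme and the interior energy identity for its restriction to $\overline{\omega}_{\bh}$. The bridge is the orthogonal splitting of the squared $H_{\bh}$-norm that results from $h_{J+1/2}=h$,
\begin{equation*}
 \|\sqrt{\rho_{\bh}}U\|_{H_{\bh}}^2
 =\|\sqrt{\rho_{\bh}}U\|_{\overline{\omega}_{\bh}}^2
 +\|\sqrt{\rho_{\bh}}U\|_{\omega_{{\bh},\infty}\backslash\,\omega_{\bh}}^2,
\end{equation*}
in which the node $j=J$ carries the weight $h=\tfrac{h}{2}+\tfrac{h}{2}$ shared equally between the interior and exterior parts. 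Since $x_{J-2}\geq X_0$ forces all coefficients to take their $\infty$-values from $j=J-1$ on, we have $\rho_{\bh}=\rho_\infty$ on $\omega_{{\bh},\infty}\backslash\,\omega_{\bh}$, so this last term equals $\rho_\infty$ times the exterior mass written out in \eqref{p515} and is in particular nonnegative.

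First I would verify the crucial fact that the trace of the infinite-mesh solution on $\{x_0,\dots,x_J\}\times\overline{\omega}_\delta$ solves the \emph{bounded} splitting scheme \eqref{sd11}-\eqref{sd21} with $F=0$ and with this very operator $\mathcal{S}^m=\mathcal{S}_{\rm ref}^m$. The half-step relations \eqref{sdi11}, \eqref{sdi15}, the main equation \eqref{sdi13} on $\omega_{\bh}$, and the homogeneous conditions on $\Gamma_{\bh}$ all transfer verbatim; the only point to establish is the approximate TBC \eqref{sd19}. For this I would write \eqref{sdi13} at the node $j=J$, where the cross coefficients vanish and $B_{11}=B_{1\infty}$, $B_{22}=B_{2\infty}$, $\widetilde{V}_h=V_\infty$, and use the elementary identity $\widehat{\partial}_x\overline{\partial}_xW_J=\tfrac{2}{h}\bigl(\overset{\circ}{\partial}_xW_J-\overline{\partial}_xW_J\bigr)$, valid because $h_J=h_{J+1}=h$. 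Substituting $\overset{\circ}{\partial}_x\tfrac{\widetilde{\Psi}^m+\breve{\Psi}^m}{2}\big|_{j=J}=\mathcal{S}_{\rm ref}^m\widetilde{\bPsi}_J^m$ from \eqref{dtbc2d} and multiplying by $\tfrac{h}{2}$ turns the $j=J$ equation exactly into \eqref{sd19} (with $F^m|_{j=J}=0$). This algebraic matching is the step I expect to be the main obstacle, as it is the only genuine computation and it is where the particular $8$-point stencil of the TBC must be reproduced.

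Granting this, identity \eqref{p413} of Proposition \ref{p12d} applies to the interior restriction. Here it is important that \eqref{p413} is derived \emph{before} the sign hypothesis \eqref{cs2d} is invoked, so no positivity of $\mathcal{S}_{\rm ref}$ is needed; with $F=0$ it reads
\begin{equation*}
 \|\sqrt{\rho_{\bh}}\Psi^M\|_{\overline{\omega}_{\bh}}^2
 +\hbar B_{1\infty}\sum_{m=1}^M\Ima\bigl(\mathcal{S}_{\rm ref}^m\bPsi_J^m,\overline{s}_t\Psi_{J\cdot}^m\bigr)_{\omega_\delta}\tau_m
 =\|\sqrt{\rho_{\bh}}\Psi^0\|_{\overline{\omega}_{\bh}}^2 .
\end{equation*}
On the other hand, \eqref{cl2d} together with the hypothesis $\Psi^0_{\bh}=0$ on $\omega_{{\bh},\infty}\backslash\,\omega_{\bh}$ gives $\|\sqrt{\rho_{\bh}}\Psi^M\|_{H_{\bh}}^2=\|\sqrt{\rho_{\bh}}\Psi^0\|_{\overline{\omega}_{\bh}}^2$. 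Finally I would insert the norm splitting into this equality and subtract the interior identity just displayed: the terms $\|\sqrt{\rho_{\bh}}\Psi^M\|_{\overline{\omega}_{\bh}}^2$ and $\|\sqrt{\rho_{\bh}}\Psi^0\|_{\overline{\omega}_{\bh}}^2$ cancel, leaving the $\mathcal{S}_{\rm ref}$-sum equal to the exterior mass $\|\sqrt{\rho_{\bh}}\Psi^M\|_{\omega_{{\bh},\infty}\backslash\,\omega_{\bh}}^2$, which is precisely \eqref{p515} and is manifestly nonnegative. The only bookkeeping to watch is the consistent half-splitting of the $j=J$ weight across all three norms and the constant factor $\rho_\infty$ noted above.
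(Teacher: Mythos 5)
Your proposal is correct and takes essentially the same route as the paper: show that the restriction of the infinite-mesh solution solves the finite-mesh scheme \eqref{sd11}--\eqref{sd21} with ${\mathcal S}={\mathcal S}_{\rm ref}$ (your explicit computation at $j=J$ via $\widehat{\partial}_x\overline{\partial}_xW_J=\tfrac{2}{h}(\overset{\circ}{\partial}_xW_J-\overline{\partial}_xW_J)$ is precisely the equivalence the paper cites from \cite{DZ06}), and then subtract the interior identity \eqref{p413} from the global balance \eqref{p513} (i.e.\ \eqref{cl2d} when $F=0$), using $\Psi^0_{\bh}=0$ outside $\omega_{\bh}$. Your remark about the constant weight $\rho_\infty$ (since $\rho_{\bh}=\rho_\infty$ for $j\geq J$) is a legitimate extra point of care, as \eqref{p515} as printed omits that factor in the exterior norm.
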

\begin{proof}
Similarly to \cite{DZ06}, equation \eqref{dtbc2d} is equivalent to the approximate TBC \eqref{sd21} provided that equation \eqref{sdi13} is valid for $j=J$ with $F|_{j=J}=0$.
Thus the solution to the splitting scheme \eqref{sdi11}-\eqref{sdi21} on the infinite mesh $\overline{\omega}_{\bf h,\infty}$ is the solution to the splitting scheme  \eqref{sd11}-\eqref{sd21} on the finite mesh $\overline{\omega}_{\bf h}$, too.
Then equality \eqref{p515} is obtained by subtracting \eqref{p413} from \eqref{p513}.
\end{proof}
\par Equality \eqref{p515} clarifies the energy sense of inequality \eqref{cs2d}
for  $\mathcal{S}^m=\mathcal{S}_{\rm ref}^m$ since
\[
 \|W\|_{H_{\bh}}^2=\|W\|_{\overline{\omega}_{\bh}}^2
 +\|W\|_{\omega_{{\bh},\infty}\backslash\,\omega_{\bh}}^2\ \
 \text{for any}\ \ W\in H_{\bh}.
\]
\par By definition, the operator ${\mathcal S}_{\rm ref}$ that has just appeared implicitly corresponds to {\it the discrete TBC}. Let us describe it explicitly.
\par Let $\overline{\omega}_{h,j_0,\infty}:=\{x_j\}_{j=j_0}^\infty$.
Then $\Delta V_{\bh}=0$ on
$\overline{\omega}_{h,J-1,\infty}\times\omega_\delta$ and clearly
\[
 \breve{\Psi}^m=\Psi^{m-1},\ \
 \Psi^m=\widetilde{\Psi}^m\ \ \text{on}\ \
 \overline{\omega}_{h,J-1,\infty}\times\overline{\omega}_\delta,\ \
 \text{for any}\ \ m\geq 1.
\]
Therefore, if  $\Psi_{\bh}^0=0$ on $\overline{\omega}_{h,J-1,\infty}\times\overline{\omega}_\delta$ and $F^m=0$ on $\overline{\omega}_{h,J,\infty}\times\overline{\omega}_\delta$ for any $m\geq 1$,
the splitting in potential scheme \eqref{sdi11}-\eqref{sdi21} is reduced
on $\overline{\omega}_{h,J-1,\infty}\times\overline{\omega}_\delta$
to the following auxiliary problem
\begin{gather}
 i\hbar\rho_\infty\overline{\partial}_t\Psi^m
 =({\mathcal H}_{0{\bf h},\infty}+V_\infty)\overline{s}_t\Psi^m
\ \ \text{on}\ \ \overline{\omega}_{h,J,\infty}\times\omega_\delta,
\label{s321}\\[1mm]
\left.\Psi^m\right|_{k=0,K}=0\ \ \text{on}\ \
 \overline{\omega}_{h,J-1,\infty},
\label{s323}\\[1mm]
 \Psi^0=0\ \ \text{on}\ \ \overline{\omega}_{h,J-1,\infty}\times\overline{\omega}_\delta,
\label{s325}
\end{gather}
for any $m\geq 1$. Here ${\mathcal H}_{0{\bf h},\infty}$ is the limiting 2D mesh Hamiltonian operator
\[
 {\mathcal H}_{0{\bf h},\infty}W
 :=-\frac{\hbar^{\,2}}{2}
 \Bigl(B_{1\infty} \widehat{\partial}_x\overline{\partial}_x W
 +B_{2\infty} \widehat{\partial}_y\overline{\partial}_y W\Bigr)
 \ \ \text{on}\ \ \omega_{{\bf h},\infty}\backslash\,\omega_{\bf h}
\]
with constant coefficients. Moreover, equation \eqref{dtbc2d} takes the simplified form
\begin{equation}
 \Bigl.\Bigl(\overset{\circ}{\partial}_x
 \overline{s}_t\Psi^m\Bigr)\Bigr|_{j=J}=
 {\mathcal S}^m\bPsi^m_J\ \ \text{on}\ \ \omega_\delta,\ \
 \text{for any}\ \ m\geq 1.
\label{s327}
\end{equation}
\par Problem \eqref{s321}-\eqref{s325} and equation \eqref{s327} are the same that correspond to the original Crank-Nicolson scheme \eqref{dse2d}-\eqref{dsi2d} and (after dividing  \eqref{s321} by $\rho_\infty$) were studied in detail in \cite{DZ06} in order to construct explicitly the discrete TBC.
We recall briefly the answer from \cite{DZ06}. To this end,
introduce the auxiliary mesh eigenvalue problem in $y$
\[
 -\widehat{\partial}_y\overline{\partial}_yE=\lambda E\ \
 \text{on}\ \ \omega_\delta,\ \ \left.E\right|_{k=0,K}=0,\ \ E\not\equiv 0.
\]
We denote by $\{E_l,\lambda_{l\delta}\}_{l=1}^{K-1}$ its eigenpairs such that the functions
$\{E_l\}_{l=1}^{K-1}$ are real-valued and form an orthonormal basis in
$\overset{\circ}{H}(\overline{\omega}_\delta)$; here $\lambda_{l\delta}>0$
for all $l$. Clearly, for any $U\in\overset{\circ}{H}(\overline{\omega}_\delta)$,
the following expansion holds
\[
 U=\mathcal{F}^{-1}U^{(\cdot)}:=\sum_{l=1}^{K-1}U^{(l)}E_l,\ \ \text{where}\ \
 U^{(l)}=(\mathcal{F}U)^{(l)}:=(U,E_l)_{\omega_\delta}\ \
 \text{for}\ \ 1\leq l\leq K-1.
\]
These formulas define the direct $\mathcal{F}$ and inverse $\mathcal{F}^{-1}$ transforms from the collection of values $\{U_k\}_{k=1}^{K-1}$ to the collection of its Fourier coefficients $\{U^{(l)}\}_{l=1}^{K-1}$ and back.
\par In the case of the uniform mesh $\overline{\omega}_\delta$,
i.e. $\delta_k=\delta$ for any $1\leq k\leq K$,
the eigenpairs are represented explicitly by the well-known formulas
\[
 (E_l)_k:=\sqrt{\frac{2}{Y}}\,\sin\frac{\pi ly_k}{Y},\ \ 0\leq k\leq K,\ \
 \lambda_{l\delta}
 :=\left(\frac{2}{\delta}\,\sin\frac{\pi\delta l}{2Y}\right)^2,\ \
 \text{for}\ \ 1\leq l\leq K-1,
\]
and the transforms $\mathcal{F}$ and $\mathcal{F}^{-1}$ can be
effectively implemented by applying the discrete fast Fourier
transform (FFT) with respect to sines.
\par Let the mesh in time $\overline{\omega}^{\,\tau}$ be uniform. Recall
the discrete convolution
\[
 (R*Q)^m:=\sum_{q=0}^mR^qQ^{m-q}\ \ \text{for any}\ \ m\geq 0
\]
of the sequences
$R,Q$: $\overline{\omega}^{\,\tau}\to\mathbb{C}$.
The operator ${\mathcal S}_{\rm ref}$ is given by a discrete convolution in $t$
\begin{equation}
 {\mathcal S}_{\rm ref}^m\bPhi^m
 =\frac{1}{2h}\,\mathcal{F}^{-1}\Bigl(R_l*(\mathcal{F}\Phi)^{(l)}\Bigr)^m\ \
 \text{for any}\ \ m\geq 1
\label{S2d}
\end{equation}
also involving the above transforms $\mathcal{F}$ and $\mathcal{F}^{-1}$ in $y$.
Expressions for the kernel sequences $R_l$, $1\leq l\leq K-1$, can be found in \cite{DZ06}, and we do not reproduce them here (see also \cite{EA01,DZZ09} for practically more convenient recurrence relations).
\begin{proposition}
\label{p3d2}
The operator ${\mathcal S}_{\rm ref}$ satisfies inequality
\eqref{cs2d}, see \cite{DZ06}.
\par Thus for the solution of the splitting in potential scheme  \eqref{sd11}-\eqref{sd21}
with the discrete TBC (i.e. with ${\mathcal S}={\mathcal S}_{\rm ref}$),
the stability bound \eqref{sb2d} holds.
\end{proposition}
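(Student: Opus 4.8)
The plan is to establish the two assertions in turn; the first — that $\mathcal{S}_{\rm ref}$ obeys \eqref{cs2d} — is the only substantive point, since the stability bound then follows at once. The crucial observation is that $\mathcal{S}_{\rm ref}$ is defined purely through the exterior data: on $\overline{\omega}_{h,J-1,\infty}\times\overline{\omega}_\delta$ one has $\Delta V_{\bh}=0$, so the splitting scheme collapses there to the plain Crank-Nicolson equation \eqref{s321} with the simplified relation \eqref{s327}. Hence the exterior problem \eqref{s321}-\eqref{s325} together with \eqref{s327}, and therefore the operator $\mathcal{S}_{\rm ref}$ furnished by the convolution formula \eqref{S2d}, coincide verbatim with those constructed for the original Crank-Nicolson scheme in \cite{DZ06}. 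Since inequality \eqref{cs2d} is a statement about $\mathcal{S}_{\rm ref}$ acting on an arbitrary admissible $\Phi$ and makes no reference to the interior scheme, it transfers directly from \cite{DZ06}.

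For a self-contained justification I would argue by the exterior energy identity, exactly as behind \eqref{p515}. Fix any $\Phi$: $\overline{\omega}_\delta\times\overline{\omega}^{\,\tau}\to\mathbb{C}$ with $\Phi^0=0$ and $\Phi|_{k=0,K}=0$, and let $\Psi$ be the unique solution of the exterior problem \eqref{s321}-\eqref{s325} that decays as $j\to\infty$ and carries the Dirichlet data $\Psi_{J\cdot}^m=\Phi^m$; by construction it satisfies \eqref{s327}, i.e. $\mathcal{S}_{\rm ref}^m\bPhi^m=(\overset{\circ}{\partial}_x\overline{s}_t\Psi^m)|_{j=J}$. Taking the inner product of \eqref{s321} with $\overline{s}_t\Psi^m$ over the exterior nodes, summing by parts in $x$ and in $y$, separating the imaginary part and summing over $m=1,\dots,M$ — the computation is the one that yields \eqref{p515} in Corollary \ref{c2p22d} — gives
\[
\hbar B_{1\infty}\,\Ima\sum_{m=1}^M\bigl(\mathcal{S}_{\rm ref}^m\bPhi^m,\overline{s}_t\Phi^m\bigr)_{\omega_\delta}\tau_m=\|\Psi^M\|_{\omega_{{\bh},\infty}\backslash\,\omega_{\bh}}^2\geq 0,
\]
the zero initial data \eqref{s325} killing the time-$0$ term and the decay killing the boundary term at infinity. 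Dividing by $\hbar B_{1\infty}>0$ yields \eqref{cs2d}.

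The second assertion is then immediate: with $\mathcal{S}=\mathcal{S}_{\rm ref}$ the hypothesis \eqref{cs2d} of Proposition \ref{p12d} holds, so the stability bound \eqref{sb2d} follows word for word from that proposition (and Corollary \ref{cp12d} yields unique solvability as well). The main obstacle lies entirely in the first step, and specifically in the well-posedness of the exterior problem: one must show that for arbitrary admissible $\Phi$ there is a unique decaying solution realizing $\Phi$ as its trace at $j=J$, and one must identify the resulting flux $(\overset{\circ}{\partial}_x\overline{s}_t\Psi)|_{j=J}$ with $\mathcal{S}_{\rm ref}^m\bPhi^m$ in the explicit convolution form \eqref{S2d}. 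This is carried out in \cite{DZ06} by diagonalizing in $y$ through the eigenbasis $\{E_l\}$, which decouples the exterior problem into $K-1$ scalar one-dimensional Crank-Nicolson problems, and then resolving each in $t$ by a generating-function ($Z$-transform) argument that selects the decaying branch and produces the kernels $R_l$; the nonnegativity in the displayed identity is precisely the statement that the exterior norm is nonnegative, which is what makes $\mathcal{S}_{\rm ref}$ an energy-dissipating discrete Dirichlet-to-Neumann map.
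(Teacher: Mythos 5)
Your proposal is correct and follows essentially the same route as the paper: the paper likewise observes that $\Delta V_{\bh}=0$ outside $\omega_{\bh}$ reduces the splitting scheme to the exterior Crank--Nicolson problem \eqref{s321}--\eqref{s325} with \eqref{s327}, so that ${\mathcal S}_{\rm ref}$ coincides with the operator of \cite{DZ06} and inequality \eqref{cs2d} transfers from there, after which the stability bound is just Proposition \ref{p12d}. Your exterior energy identity is exactly the content of Corollary \ref{c2p22d} and its equality \eqref{p515}, which the paper presents as the ``energy sense'' of \eqref{cs2d}, with the remaining well-posedness and identification of the flux deferred to \cite{DZ06} in both treatments.
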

\par If the matrix ${\mathbf B}$
is diagonal and $y$-independent together with $\rho$
as well as the mesh $\overline{\omega}_\delta$ is uniform,
the splitting in potential scheme \eqref{sd11}-\eqref{sd21} with the discrete TBC can be effectively implemented. Indeed, then
(omitting $F$ for simplicity)
we can apply $\mathcal{F}$ to the main equation \eqref{sd13} and the discrete TBC \eqref{sd19} with ${\mathcal S}={\mathcal S}_{\rm ref}$,
take into account the boundary condition $\widetilde{\Psi}^m|_{\Gamma_{\bh}}=0$ \eqref{sd19} and formula \eqref{S2d} and obtain a collection of 1D disjoint problems:
\begin{gather}
 i\hbar\rho_h\frac{\widetilde{\Psi}^{m(l)}-\breve{\Psi}^{m(l)}}{\tau}
 =-\frac{\hbar^2}{2}\widehat{\partial}_x\Bigl(B_{11h}\overline{\partial}_x
 \frac{\widetilde{\Psi}^{m(l)}+\breve{\Psi}^{m(l)}}{2}\Bigr)
 +\widetilde{V}_{l}\frac{\widetilde{\Psi}^{m(l)}+\breve{\Psi}^{m(l)}}{2}
 \ \ \text{on}\ \ \omega_h,
\label{sd513}\\[1mm]
 \bigl.\widetilde{\Psi}^{m(l)}\bigr|_{j=0}=0,
 \label{sd517}
\end{gather}
\begin{gather}
 \left.
 \left\{
 \frac{\hbar^{\,2}}{2}B_{1\infty}\overline{\partial}_x
 \frac{\widetilde{\Psi}^{m(l)}+\breve{\Psi}^{m(l)}}{2}
 -\frac{h}{2}
 \Bigl[i\hbar\rho_{\infty}\frac{\widetilde{\Psi}^{m(l)}-\breve{\Psi}^{m(l)}}{\tau}
 -\widetilde{V}_{l}\frac{\widetilde{\Psi}^{m(l)}+\breve{\Psi}^{m(l)}}{2}\Bigr]
 \right\}\right|_{j=J}
\nonumber\\[1mm]
 =\frac{\hbar^{\,2}}{2}B_{1\infty}
 \frac{1}{2h}\Bigl(R_l*\widetilde{\bPsi}^{(l)}_J\Bigr)^m,
\label{sd519}
\end{gather}
with $1\leq l\leq K-1$, for any $m\geq 1$, where
\[
 \widetilde{V}_{l}:=\frac{\hbar^{\,2}}{2}B_{2\infty}\lambda_{l\delta}
 +\widetilde{V},\ \
 \widetilde{\bPsi}^{m(l)}_J
 =\Bigl\{\widetilde{\Psi}^{1(l)}_J,\dots,\widetilde{\Psi}^{m(l)}_J\Bigr\}.
\]
\par Given $\Psi^{m-1}$, the direct algorithm for computing $\Psi^m$ comprises five steps.
\begin{enumerate}
\item $\breve{\Psi}^m$ is computed on $(\omega_h\cup{x_J})\times\omega_{\delta}$ according to \eqref{sd23}.
\item $\Bigl\{(\breve{\Psi}^m_{j\cdot})^{(l)}\Bigr\}_{l=1}^{K-1}$ is computed by applying $\mathcal{F}$ for any $1\leq j\leq J$.
\item $\Bigl\{\widetilde{\Psi}^{m(l)}_j\Bigr\}_{j=1}^J$ is computed by solving problem \eqref{sd513}-\eqref{sd519} for any $1\leq l\leq K-1$.
\item $\Bigl\{\widetilde{\Psi}^m_{jk}\Bigr\}_{k=1}^{K-1}$ is computed by applying $\mathcal{F}^{-1}$ for any $1\leq j\leq J$.
\item $\Psi^m$ is computed on $(\omega_h\cup{x_J})\times\omega_{\delta}$ according to \eqref{sd23}.
\end{enumerate}
\par Steps 1 and 5 require $O(JK)$ arithmetic operations, steps 2 and 4 require $O(JK\log_2K)$ operations using FFT provided that $K=2^p$ with the integer $p$, and step 3 requires $O((J+m)K)$ operations. The total amount of operations is $O((J\log_2K+m)K)$ for computing the solution on the time level $m$ and $O((J\log_2K+M)KM)$ for computing the solution on $M$ time levels $m=1,\ldots,M$.
\par Notice that the algorithm essentially enlarges possibilities of the corresponding one in \cite{ZZ11,IZ11} and also is highly parallelizable.
\section{Numerical experiments}
\label{sectnum}
We have implemented the above algorithm.
For numerical experiments, similarly to \cite{IZ10}, we take $\rho(x,y)\equiv 1$, $\mathcal{H}_0=-\Delta$, $\hbar=1$ and a simple rectangular potential-barrier
\[
 V(x,y)=
 \begin{cases}
Q &\text{for}\ \ (x,y)\in (a,b)\times (c,d)\\[1mm]
0 &\text{otherwise}
\end{cases},\ \ Q>0.
\]
On the other hand, from the numerical point of view, this barrier is not so simple since it is discontinuous and thus the corresponding exact solution is not smooth. Below we take the fixed $(a,b)=(1.6,1.7)$ and $(c,d)$ of three different lengths in Examples A, B and C.
\par Let the initial function be the Gaussian wave package
\[
 \psi^0(x,y)=
 \psi_G(x, y) \equiv
 \exp\left\{ i\sqrt{2}k(x-x^{(0)})-\frac{(x-x^{(0)})^2+(y-y^{(0)})^2}
 {4\alpha}\right\}\ \ \text{on}\ \ \mathbb{R}^2.
\]
We choose the parameters $k=30$ and $\alpha=\frac{1}{120}$.
\par We solve the initial boundary value problem in the infinite strip $\mathbb{R}\times (0,Y)$, choose the computational domain $\bar{\Omega}_X\times[0,T]$ such that $(a,b)\times (c,d)\subset \Omega_X$ and $\psi_G$ is small enough outside $\bar{\Omega}_X$. Namely, below $X=3$ and $Y=2.8$ together with
$(x^{(0)},y^{(0)})=(1,\frac{Y}{2})$ as well as $T=t_M=0.027$.
We use the uniform meshes in $x$, $y$ and $t$
with steps respectively $h=\frac{X}{J}$, $\delta=\frac{Y}{K}$ and $\tau=\frac{T}{M}$.
\par We accordingly modify the splitting in potential scheme \eqref{sd11}-\eqref{sd21} replacing $\omega_h\cup{x_J}$ by $\overline{\omega}_h$ in \eqref{sd11} and \eqref{sd15}, the boundary conditions \eqref{sd17} by
\[
 \breve{\Psi}^m|_{k=0,K}=0,\ \ \widetilde{\Psi}^m|_{k=0,K}=0,\ \
 \Psi^m|_{k=0,K}=0\ \ \text{on}\ \ \overline{\omega}_h
\]
together with the left discrete TBC at $j=0$ similar to the right one \eqref{sd19} for ${\mathcal S}={\mathcal S}_{\rm ref}$.
On Figure \ref{SSP:EX22a:B:Initial} the modulus and the real part of $\psi_G$ are shown on the computational domain. The normalized barrier (with $Q=1$) is situated there as well, for $(c,d)=(0.7,2.1)$ (see Example B below).
\begin{figure}[ht]\centerline{
    \includegraphics[width=0.5\linewidth]{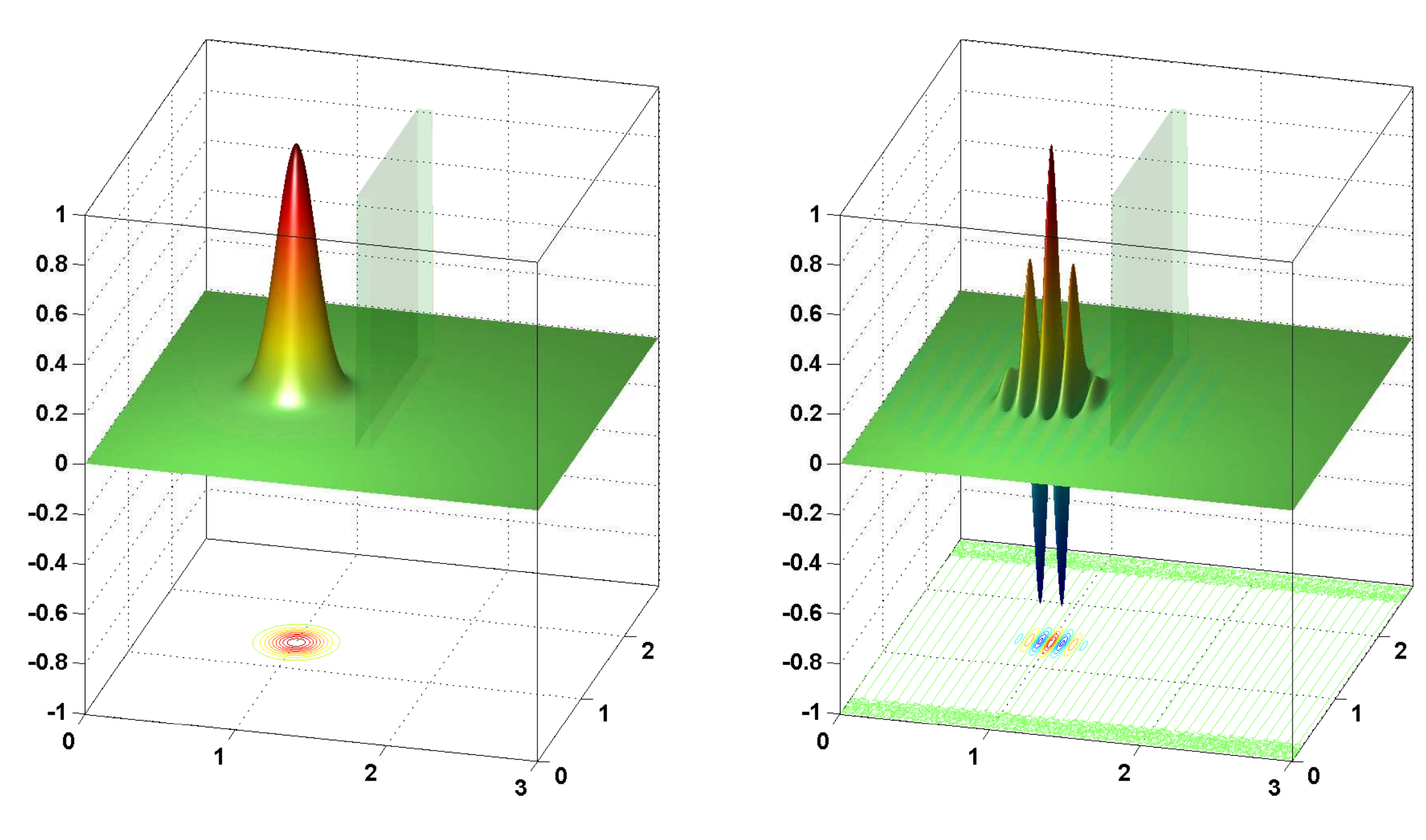}}
\caption{\small{The modulus (left) and the real part (right) of the initial function $\psi_G$} together with the normalized barrier from Example B
\label{SSP:EX22a:B:Initial}}
\end{figure}
\smallskip\par \textbf{Example A}. We first take $(c,d)=(0,Y)$ and $Q=1500$ (previously the same example was in fact treated in \cite{IZ10} by a method from \cite{ZZ11,IZ11}). In this case, the wave package is divided into two similar reflected and transmitted parts moving in opposite directions with respect to the barrier. A little bit surprisingly, the solution is almost the same as in the case $(c,d)=(\frac{Y}{4},\frac{3Y}{4})$ in Example B (that is why the corresponding graphs of the solution are given below).
Namely, for the fine mesh with
$(J,K,M)=(9600,512,4800)$,
norms of differences between the pseudo-exact solution for $(c,d)=(0,Y)$ computed by the Crank-Nicolson scheme and one for $(c,d)=(\frac{Y}{4},\frac{3Y}{4})$ computed by the splitting in potential scheme are
\[
 E_{C}\approx 1.81\cdot10^{-3},\ \ E_{L^2}\approx 5.02\cdot10^{-4},
\]
i.e., they are actually small.
In this section, we exploit the splitting method with the simplest choice $\widetilde{V}=0$ unless the contrary is explicitly stated.
Hereafter $E_{C}$ and $E_{L^2}$ denote differences/errors in the mesh norms that are uniform in time as well as $C$ (i.e. uniform) and $L^2$ in space.
\par Notice also that the norms of differences between the pseudo-exact solutions on the fine mesh and on the mesh with $(J,K,M)=(1200,64,600)$
are
\[
 E_{C}\approx2.70\cdot10^{-2},\ \ E_{L^2}\approx1.65\cdot10^{-2},
\]
for the Crank-Nicolson scheme, and
\[
 E_{C}\approx2.54\cdot10^{-2},\ \ E_{L^2}\approx1.60\cdot10^{-2},
\]
for the splitting scheme (see also Figure \ref{SSP:EX22a:B:Errors} below for more detail), i.e., they are small enough and close to each other.
\smallskip\par \textbf{Example B}. Next we consider the barrier with $(c,d)=(\frac{Y}{4},\frac{3Y}{4})=(0.7,2.1)$  (that is one-half in length of the first one), for three values of the barrier height $Q$, in order to get qualitatively varying behavior of solutions (compare with \cite{IZ10}).
\par First, once again we take the barrier height $Q=1500$.
We exploit the mesh with $(J,K,M)=(1200,64,600)$
so that
$h=2.5\cdot 10^{-3}$, $\delta=4.375\cdot10^{-2}$ and $\tau=4.5\cdot 10^{-5}$.
Though $\frac{J}{K}\approx 19$ is large, we qualify that such choice is reasonable.
In particular, for comparison we exploit the above mentioned pseudo-exact solution on the fine mesh with $(J,K,M)=(9600,512,4800)$
that all three are 8 times larger and imply the
steps $h=3.125\cdot 10^{-4}$, $\delta\approx 5.469\cdot10^{-3}$ and $\tau=5.625\cdot 10^{-6}$.
Figure \ref{SSP:EX22a:B:Errors} demonstrates the behavior of the absolute and relative errors in $C$ and $L^2$ space mesh norms in dependence with time.
\begin{figure}[ht]\centerline{
    \includegraphics[width=0.5\linewidth]{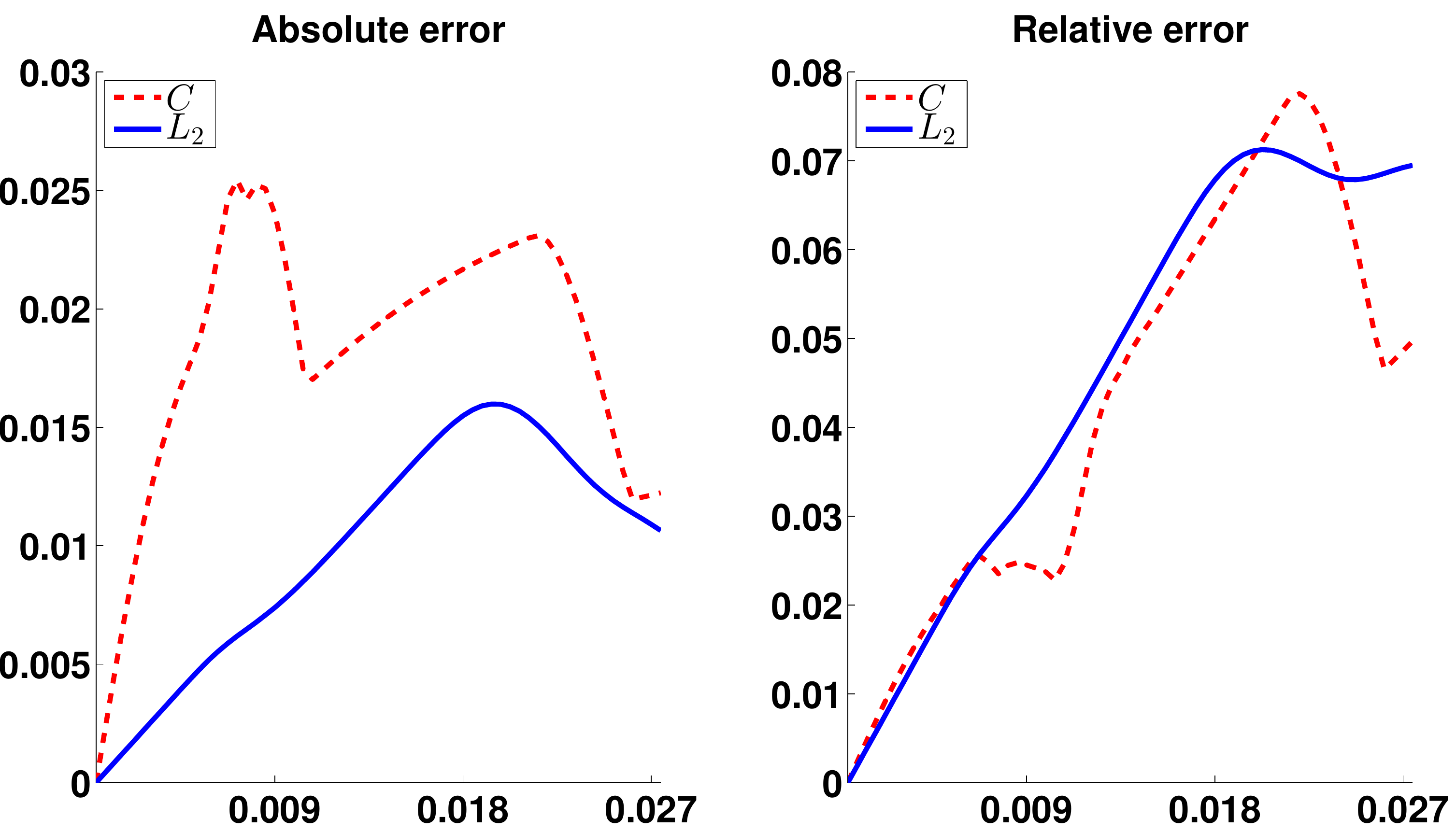}}
\caption{\small{Example B ($Q=1500$). The absolute and relative errors in $C$ and $L^2$ norms in dependence with time for the numerical solution for $(J,K,M)=(1200,64,600)$}
\label{SSP:EX22a:B:Errors}}
\end{figure}
\par The modulus and the real part of the numerical solution $\Psi^{m}$ are given on Figure \ref{SSP:EX22a:B:Solution}, for the time moments $t_m=m\tau$, $m=180, 300, 420$ and $600$. In this case, the wave package is divided into two rather similar reflected and transmitted parts moving in opposite directions with respect to the barrier.
\begin{figure}[ht]
\begin{multicols}{2}
    \includegraphics[scale=0.25]{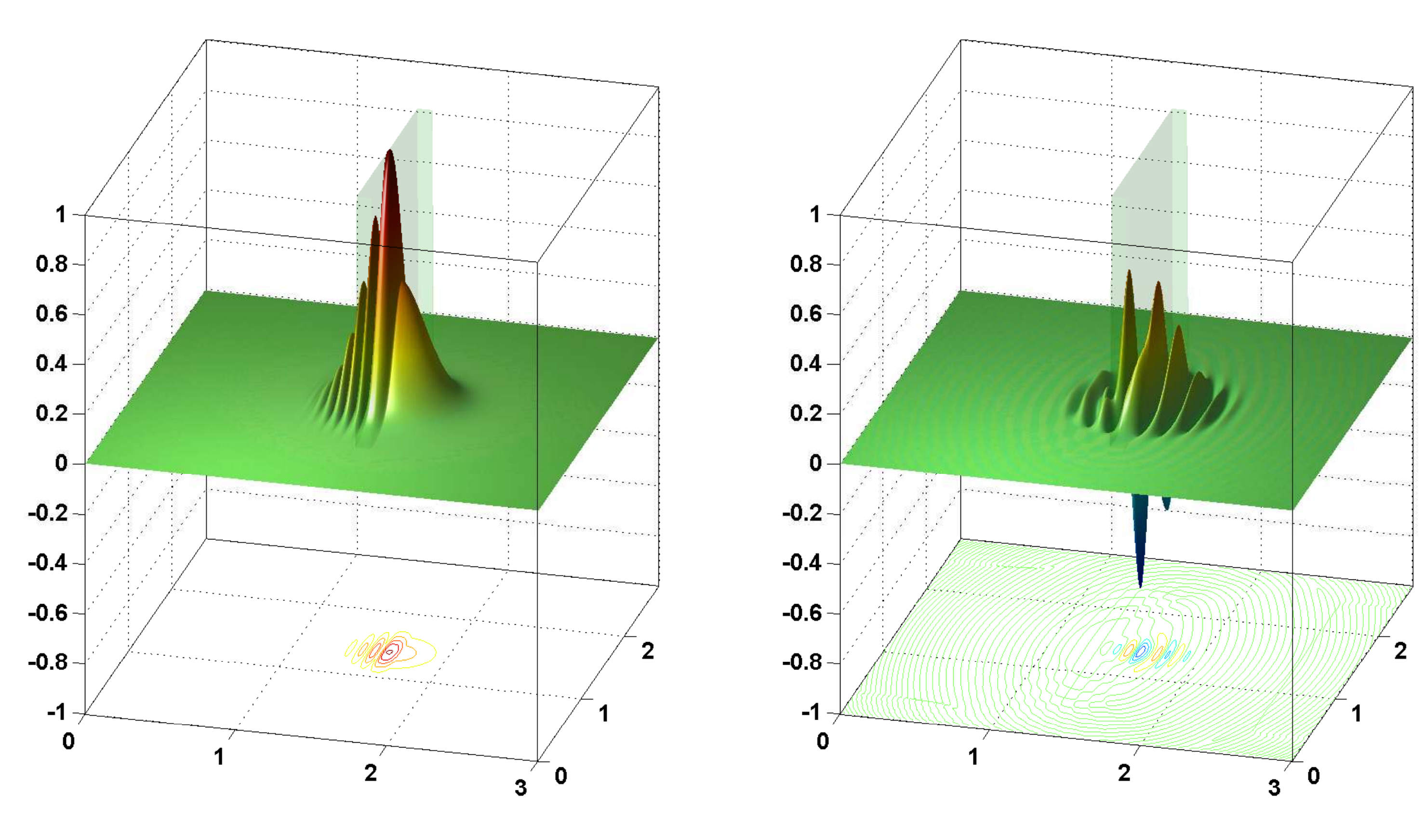}
    \vspace{0cm}\\
    \centerline{\small{$m=180$}}\\
    \includegraphics[scale=0.25]{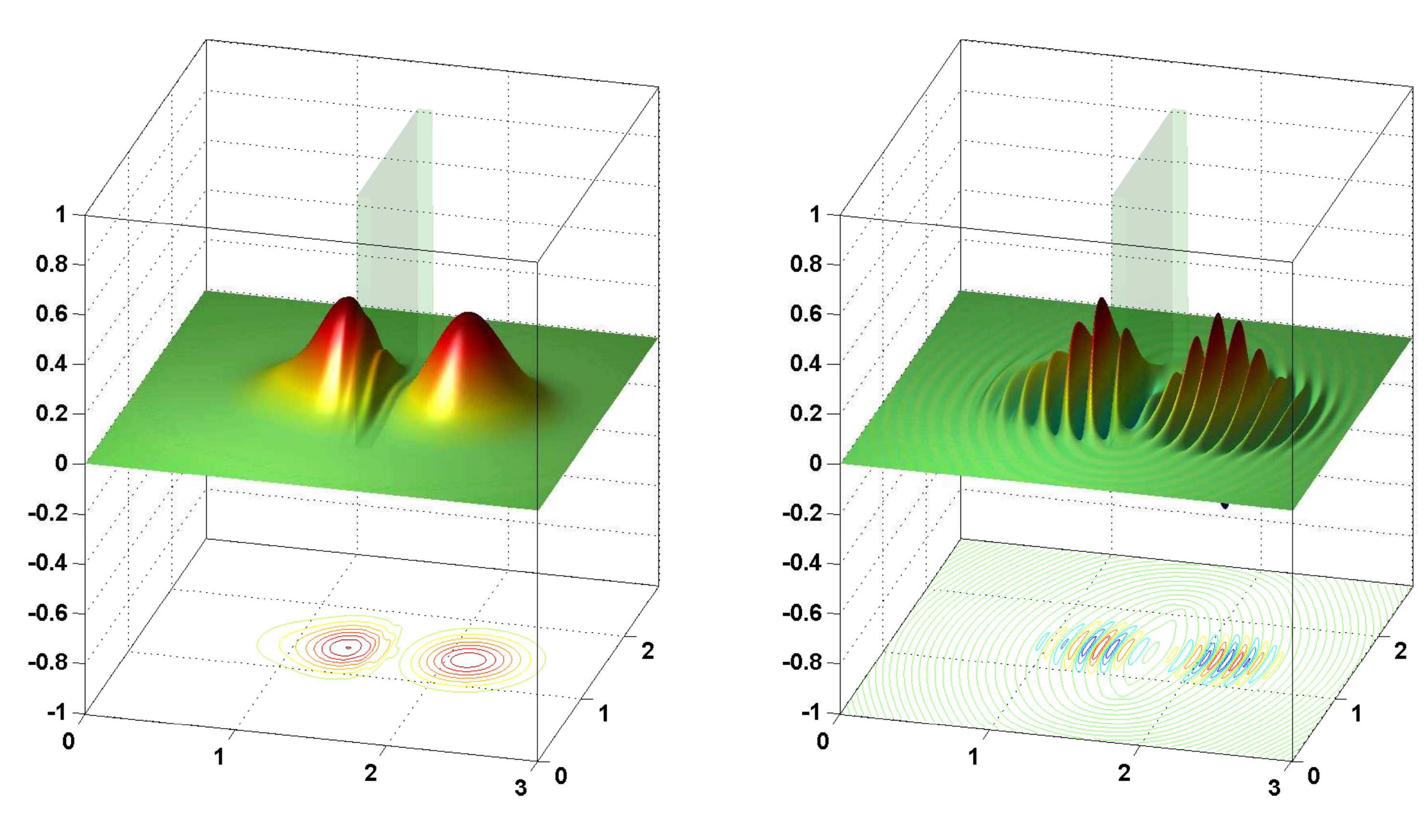}
    \vspace{0cm}\\
    \centerline{\small{$m=300$}}\\
\end{multicols}
\begin{multicols}{2}
    \includegraphics[scale=0.25]{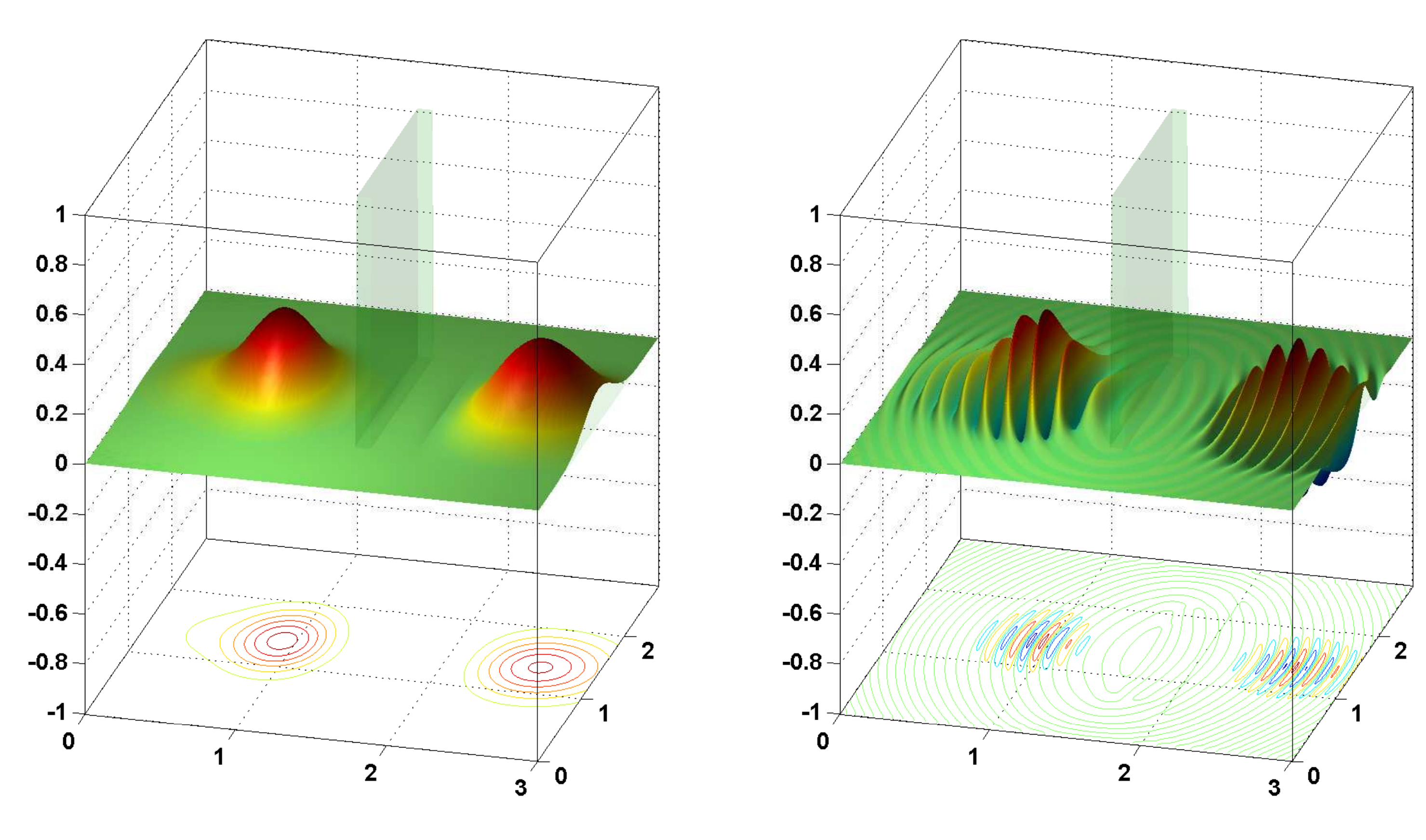}
    \vspace{0cm}\\
    \centerline{\small{$m=420$}}\\
    \includegraphics[scale=0.25]{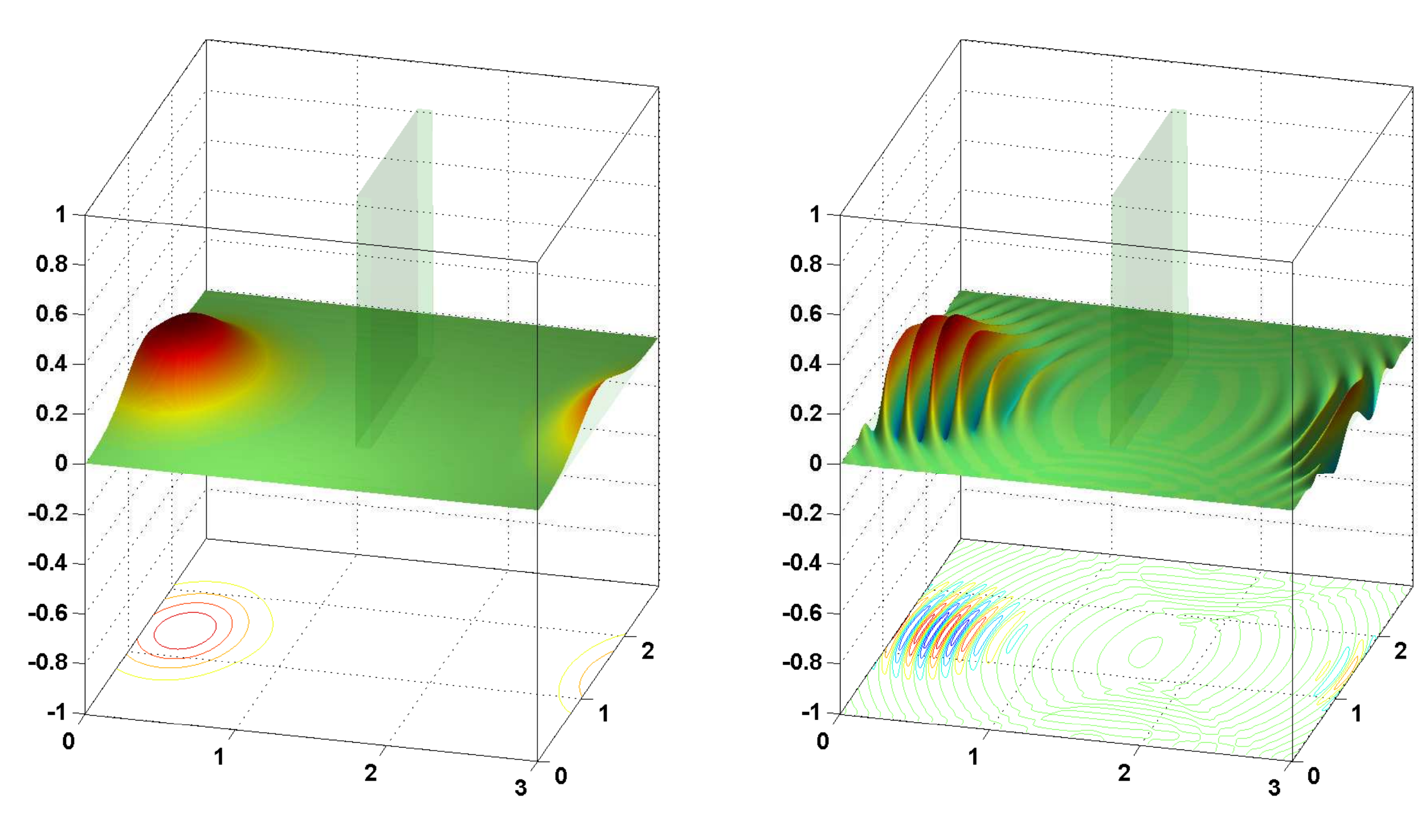}
    \vspace{0cm}\\
    \centerline{\small{$m=600$}}\\
    \end{multicols}
\caption{\small{Example B ($Q=1500$). The modulus and the real part of the numerical solution $\Psi^m$
for $(J,K,M)=(1200,64,600)$}}
\label{SSP:EX22a:B:Solution}
\end{figure}
%
%
\par We continue to study the error behavior in more detail in
Table \ref{SSP:tab:EX22a:M}. It contains errors of the solutions to the splitting method for increasing $J$, $K$ and $M$ respectively (for sufficiently large values of two other numbers).
The associated ratios $R_C$ and $R_{L^2}$ of the sequential errors are also put there. They are rather close to 4 excluding the last rows that means almost the second order of convergence with respect to each of $J$, $K$ and $M$, both in $C$ and $L^2$ mesh norms. The deterioration of $R_C$ and $R_{L^2}$ in the last rows is explained by more essential influence of the errors due to the chosen discretization in other directions.
\par The last columns in the tables contain also the respective ratios of runtimes.
One can see that they all are close to 2 when any of $J$, $K$ or $M$ increases twice.
\begin{table}
    \centering{
    \begin {tabular}{r<{\pgfplotstableresetcolortbloverhangright }@{}l<{\pgfplotstableresetcolortbloverhangleft }r<{\pgfplotstableresetcolortbloverhangright }@{}l<{\pgfplotstableresetcolortbloverhangleft }r<{\pgfplotstableresetcolortbloverhangright }@{}l<{\pgfplotstableresetcolortbloverhangleft }r<{\pgfplotstableresetcolortbloverhangright }@{}l<{\pgfplotstableresetcolortbloverhangleft }r<{\pgfplotstableresetcolortbloverhangright }@{}l<{\pgfplotstableresetcolortbloverhangleft }r<{\pgfplotstableresetcolortbloverhangright }@{}l<{\pgfplotstableresetcolortbloverhangleft }}%
\toprule \multicolumn {2}{c}{$J$}&\multicolumn {2}{c}{$E_C$}&\multicolumn {2}{c}{$R_{C}$}&\multicolumn {2}{c}{$E_{L^2}$}&\multicolumn {2}{c}{$R_{L^2}$}&\multicolumn {2}{c}{$R_{\rm time}$}\\\midrule %
$300$&$$&$0$&$.22$&--&&$0$&$.12$&--&&--&\\%
$600$&$$&$5$&$.62\cdot 10^{-2}$&$3$&$.99$&$3$&$.10\cdot 10^{-2}$&$3$&$.94$&$1$&$.33$\\%
$1\,200$&$$&$1$&$.42\cdot 10^{-2}$&$3$&$.95$&$7$&$.91\cdot 10^{-3}$&$3$&$.92$&$1$&$.47$\\%
$2\,400$&$$&$3$&$.77\cdot 10^{-3}$&$3$&$.77$&$2$&$.16\cdot 10^{-3}$&$3$&$.66$&$1$&$.63$\\%
$4\,800$&$$&$1$&$.20\cdot 10^{-3}$&$3$&$.15$&$7$&$.65\cdot 10^{-4}$&$2$&$.83$&$1$&$.81$\\\bottomrule %
\end {tabular}%
\\[2mm]
    \begin {tabular}{r<{\pgfplotstableresetcolortbloverhangright }@{}l<{\pgfplotstableresetcolortbloverhangleft }r<{\pgfplotstableresetcolortbloverhangright }@{}l<{\pgfplotstableresetcolortbloverhangleft }r<{\pgfplotstableresetcolortbloverhangright }@{}l<{\pgfplotstableresetcolortbloverhangleft }r<{\pgfplotstableresetcolortbloverhangright }@{}l<{\pgfplotstableresetcolortbloverhangleft }r<{\pgfplotstableresetcolortbloverhangright }@{}l<{\pgfplotstableresetcolortbloverhangleft }r<{\pgfplotstableresetcolortbloverhangright }@{}l<{\pgfplotstableresetcolortbloverhangleft }}%
\toprule \multicolumn {2}{c}{$K$}&\multicolumn {2}{c}{$E_{C}$}&\multicolumn {2}{c}{$R_{C}$}&\multicolumn {2}{c}{$E_{L^2}$}&\multicolumn {2}{c}{$R_{L^2}$}&\multicolumn {2}{c}{$R_{\rm time}$}\\\midrule %
$16$&$$&$0$&$.15$&--&&$6$&$.62\cdot 10^{-2}$&--&&--&\\%
$32$&$$&$3$&$.43\cdot 10^{-2}$&$4$&$.36$&$2$&$.34\cdot 10^{-2}$&$2$&$.84$&$2$&$.13$\\%
$64$&$$&$8$&$.65\cdot 10^{-3}$&$3$&$.96$&$6$&$.58\cdot 10^{-3}$&$3$&$.55$&$1$&$.91$\\%
$128$&$$&$2$&$.29\cdot 10^{-3}$&$3$&$.79$&$1$&$.81\cdot 10^{-3}$&$3$&$.63$&$1$&$.86$\\%
$256$&$$&$1$&$.20\cdot 10^{-3}$&$1$&$.91$&$7$&$.65\cdot 10^{-4}$&$2$&$.37$&$1$&$.87$\\\bottomrule %
\end {tabular}%
\\[2mm]
    \begin {tabular}{r<{\pgfplotstableresetcolortbloverhangright }@{}l<{\pgfplotstableresetcolortbloverhangleft }r<{\pgfplotstableresetcolortbloverhangright }@{}l<{\pgfplotstableresetcolortbloverhangleft }r<{\pgfplotstableresetcolortbloverhangright }@{}l<{\pgfplotstableresetcolortbloverhangleft }r<{\pgfplotstableresetcolortbloverhangright }@{}l<{\pgfplotstableresetcolortbloverhangleft }r<{\pgfplotstableresetcolortbloverhangright }@{}l<{\pgfplotstableresetcolortbloverhangleft }r<{\pgfplotstableresetcolortbloverhangright }@{}l<{\pgfplotstableresetcolortbloverhangleft }}%
\toprule \multicolumn {2}{c}{$M$}&\multicolumn {2}{c}{$E_{C}$}&\multicolumn {2}{c}{$R_{C}$}&\multicolumn {2}{c}{$E_{L^2}$}&\multicolumn {2}{c}{$R_{L^2}$}&\multicolumn {2}{c}{$R_{\rm time}$}\\\midrule %
$150$&$$&$0$&$.17$&--&&$9$&$.10\cdot 10^{-2}$&--&&--&\\%
$300$&$$&$4$&$.39\cdot 10^{-2}$&$3$&$.91$&$2$&$.37\cdot 10^{-2}$&$3$&$.84$&$2$&$.1$\\%
$600$&$$&$1$&$.12\cdot 10^{-2}$&$3$&$.9$&$6$&$.20\cdot 10^{-3}$&$3$&$.83$&$2$&$.02$\\%
$1\,200$&$$&$3$&$.16\cdot 10^{-3}$&$3$&$.56$&$1$&$.83\cdot 10^{-3}$&$3$&$.39$&$2$&$.01$\\%
$2\,400$&$$&$1$&$.20\cdot 10^{-3}$&$2$&$.64$&$7$&$.65\cdot 10^{-4}$&$2$&$.39$&$2$&$.09$\\\bottomrule %
\end {tabular}%

\caption{Example B ($Q=1500$). Errors, ratios of errors and ratios of runtimes in dependence with
$J$ (for $K=256$ and $M=2400$),
$K$ (for $J=4800$ and $M=2400$) or
$M$ (for $J=4800$ and $K=256$)}
\label{SSP:tab:EX22a:M}}
\end{table}
\par In Table \ref{SSP:tab:EX22a:comparison} we put $C$ and $L^2$ errors for some selected values of $J$, $K$ and $M$.
They all decrease monotonically as $J$, $K$ or $M$ increase.
We also compare there the numerical solutions of the splitting method with $\widetilde{V}=0$ and $\widetilde{V}(x)=Q\chi(x)$, where $\chi(x)$ is the characteristic function of the interval $(a,b)$; two last columns of the table contain percentages
\[
 P_{C}:=\Bigl(\frac{\left.E_{C}\right|_{\widetilde{V}=0}}
            {\left.E_{C}\right|_{\widetilde{V}=Q\chi}}-1\Bigr)\cdot 100\%,\ \
 P_{L^2}:=\Bigl(\frac{\left.E_{L^2}\right|_{\widetilde{V}=0}}
            {\left.E_{L^2}\right|_{\widetilde{V}=Q\chi}}-1\Bigr)\cdot 100\%.
\]
One can see that the second choice $\widetilde{V}=Q\chi$ also works but the first one $\widetilde{V}=0$ mostly leads to better results.
\begin{table}\centering{
    \begin {tabular}{r<{\pgfplotstableresetcolortbloverhangright }@{}l<{\pgfplotstableresetcolortbloverhangleft }r<{\pgfplotstableresetcolortbloverhangright }@{}l<{\pgfplotstableresetcolortbloverhangleft }r<{\pgfplotstableresetcolortbloverhangright }@{}l<{\pgfplotstableresetcolortbloverhangleft }r<{\pgfplotstableresetcolortbloverhangright }@{}l<{\pgfplotstableresetcolortbloverhangleft }r<{\pgfplotstableresetcolortbloverhangright }@{}l<{\pgfplotstableresetcolortbloverhangleft }r<{\pgfplotstableresetcolortbloverhangright }@{}l<{\pgfplotstableresetcolortbloverhangleft }r<{\pgfplotstableresetcolortbloverhangright }@{}l<{\pgfplotstableresetcolortbloverhangleft }}%
\toprule \multicolumn {2}{c}{$J$}&\multicolumn {2}{c}{$K$}&\multicolumn {2}{c}{$M$}&\multicolumn {2}{c}{$E_{C}$}&\multicolumn {2}{c}{$E_{L^2}$}&\multicolumn {2}{c}{$P_{C}$}&\multicolumn {2}{c}{$P_{L^2}$}\\\midrule %
$1\,200$&$$&$64$&$$&$600$&$$&$2$&$.54\cdot 10^{-2}$&$1$&$.60\cdot 10^{-2}$&$-5$&$.65$&$-3$&$.08$\\%
$1\,200$&$$&$128$&$$&$600$&$$&$2$&$.42\cdot 10^{-2}$&$1$&$.37\cdot 10^{-2}$&$-6$&$.6$&$-3$&$.78$\\%
$1\,200$&$$&$64$&$$&$1\,200$&$$&$1$&$.83\cdot 10^{-2}$&$1$&$.21\cdot 10^{-2}$&$0$&$.36$&$-0$&$.91$\\%
$1\,200$&$$&$128$&$$&$1\,200$&$$&$1$&$.63\cdot 10^{-2}$&$9$&$.40\cdot 10^{-3}$&$-2$&$.49$&$-1$&$.33$\\%
$2\,400$&$$&$64$&$$&$600$&$$&$1$&$.64\cdot 10^{-2}$&$1$&$.09\cdot 10^{-2}$&$-5$&$.48$&$-3$&$.99$\\%
$2\,400$&$$&$128$&$$&$600$&$$&$1$&$.38\cdot 10^{-2}$&$8$&$.05\cdot 10^{-3}$&$-11$&$.39$&$-6$&$.27$\\%
$2\,400$&$$&$64$&$$&$1\,200$&$$&$1$&$.03\cdot 10^{-2}$&$7$&$.73\cdot 10^{-3}$&$1$&$.01$&$-0$&$.96$\\%
$2\,400$&$$&$128$&$$&$1\,200$&$$&$6$&$.00\cdot 10^{-3}$&$3$&$.82\cdot 10^{-3}$&$-6$&$.3$&$-3$&$.02$\\\bottomrule %
\end {tabular}%

\caption{Example B ($Q=1500$). Errors of the numerical solutions for $\widetilde{V}=0$ and percentages of their changes when taking $\widetilde{V}=Q\chi$}
\label{SSP:tab:EX22a:comparison}}
\end{table}
\par In addition, for the fine mesh with $(J,K,M)=(9600,512,4800)$
the norms of differences between the solutions for these two different $\widetilde{V}$ are
\[
 E_{C}\approx3.32\cdot 10^{-5},\ \ E_{L^2}\approx1.04\cdot 10^{-5},
\]
i.e., they are very small.
\par Second, we take a less barrier height $Q=1000$. This situation is simpler from the numerical point of view. The numerical results are demonstrated on Figure \ref{SSP:EX22a:B:Q=1000:Solution} for the same time moments and the mesh. Now the wave package goes through the barrier with an essentially less reflection.
\begin{figure}[ht]
\begin{multicols}{2}
    \includegraphics[scale=0.25]{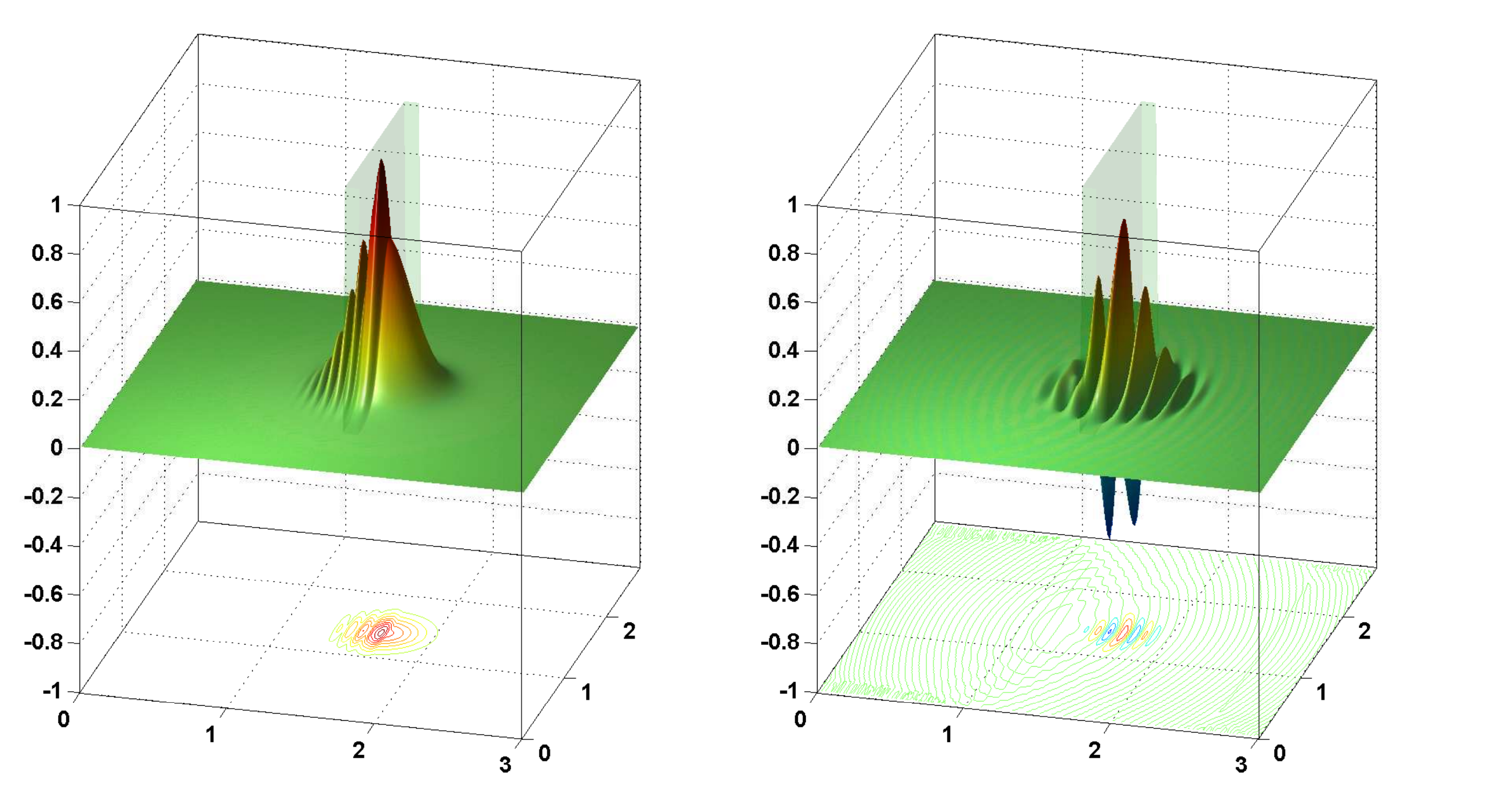}
    \vspace{0cm}\\
    \centerline{\small{$m=180$}}\\
    \includegraphics[scale=0.25]{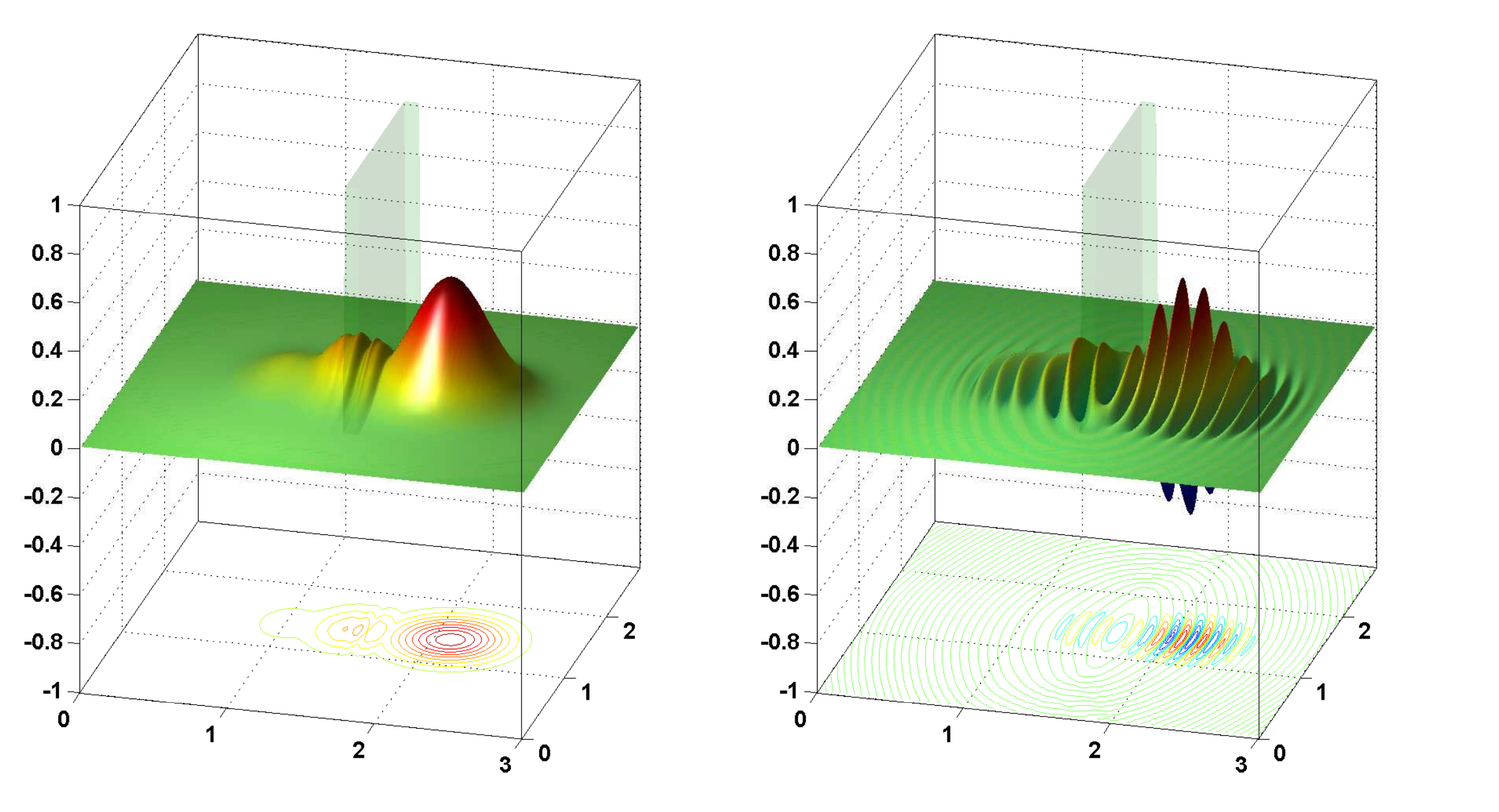}
    \vspace{0cm}\\
    \centerline{\small{$m=300$}}\\
\end{multicols}
\begin{multicols}{2}
    \includegraphics[scale=0.25]{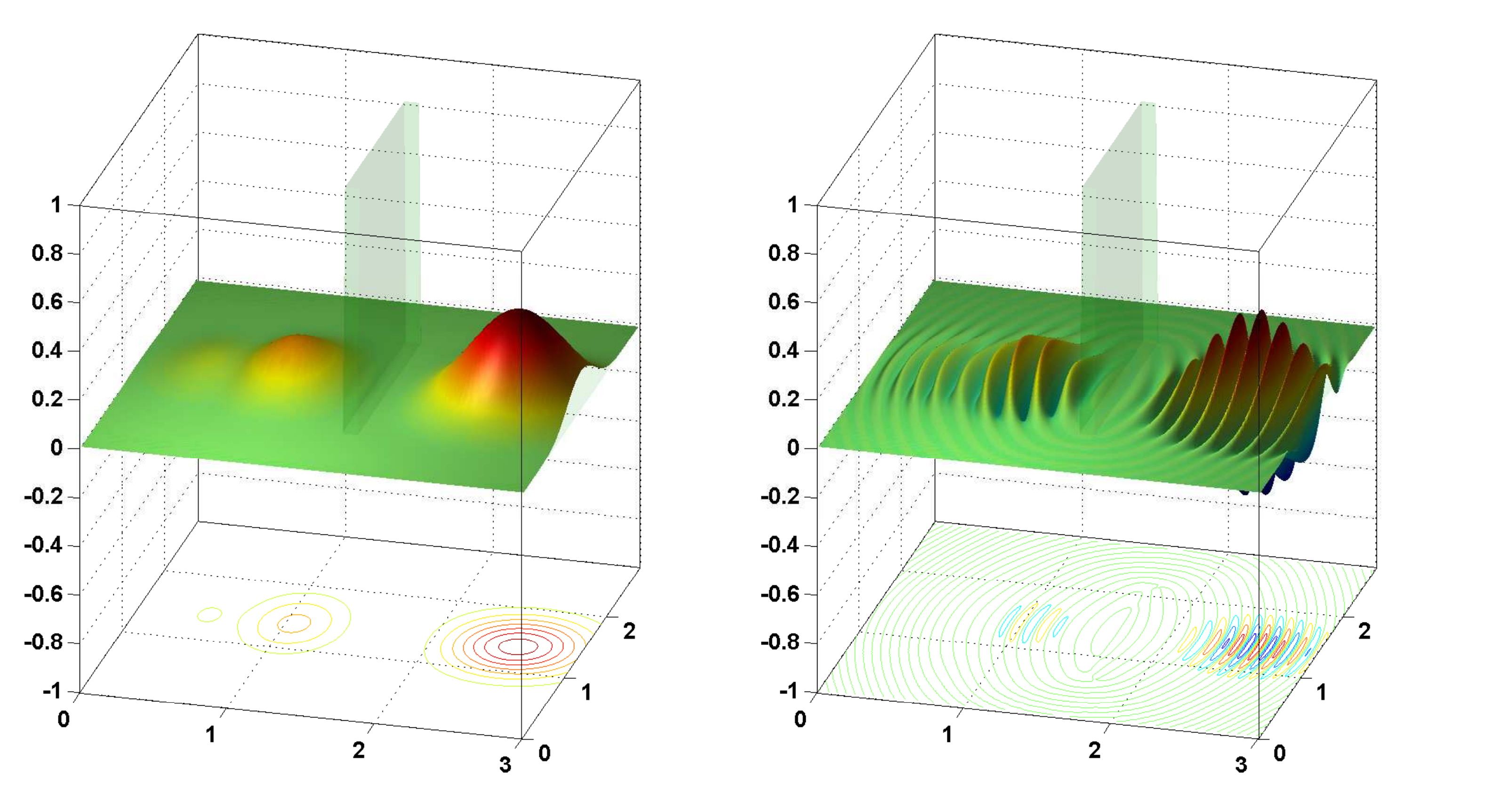}
    \vspace{0cm}\\
    \centerline{\small{$m=420$}}\\
    \includegraphics[scale=0.25]{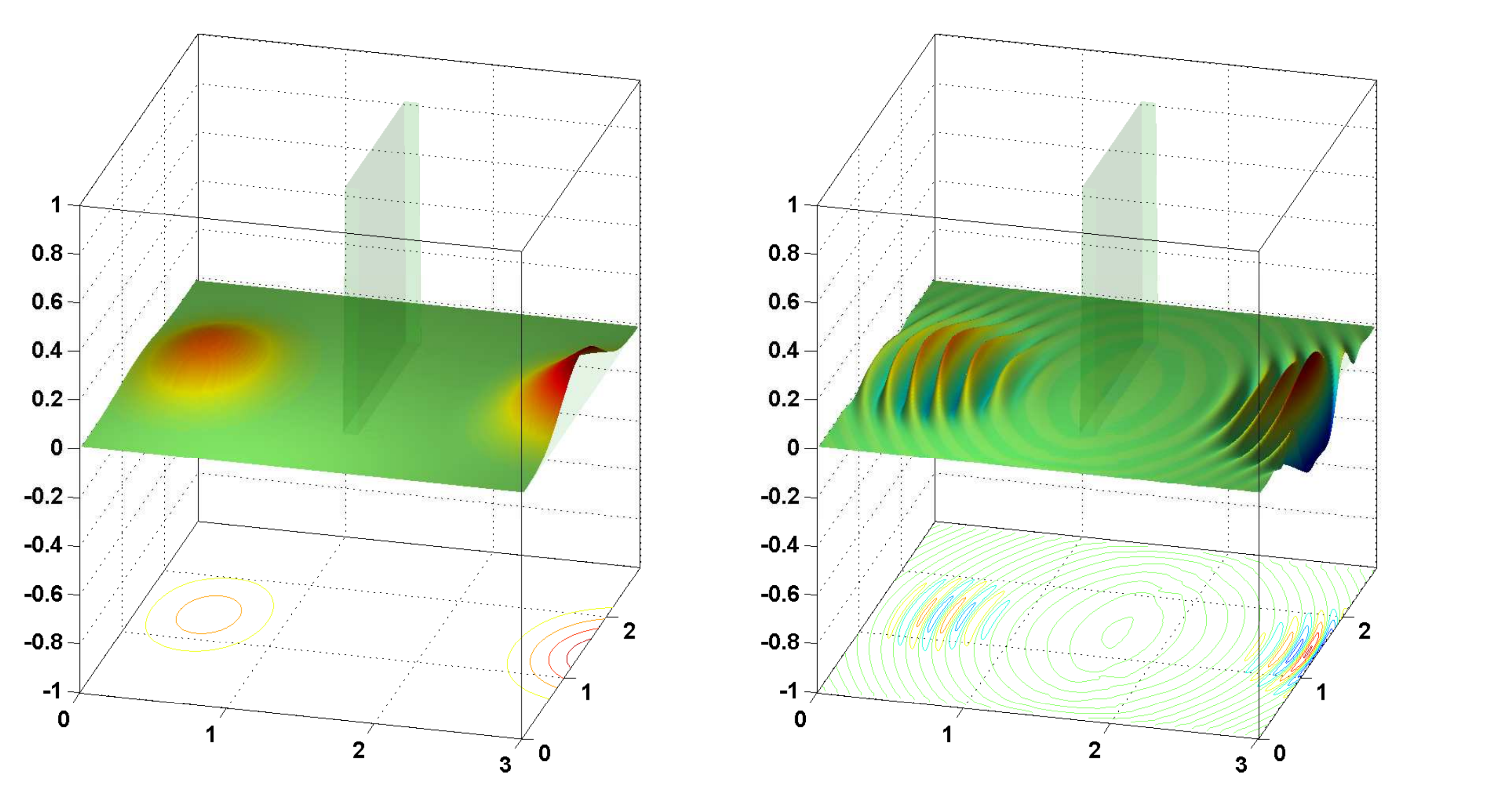}
    \vspace{0cm}\\
    \centerline{\small{$m=600$}}\\
    \end{multicols}
\caption{\small{Example B $(Q=1000)$. The modulus and the real part of the numerical solution $\Psi^m$
for $(J,K,M)=(1200,64,600)$}}
\label{SSP:EX22a:B:Q=1000:Solution}
\end{figure}
%
%
\par Third, let $Q = 4000$ be rather large. On Figure \ref{SSP:EX22a:B:Q=4000:Solution} the numerical solution is represented for the same time moments and the mesh. Here the main part of the wave is reflected from the barrier and then moves in the opposite direction along the $x$ axis.
\begin{figure}[ht]
\begin{multicols}{2}
    \includegraphics[scale=0.25]{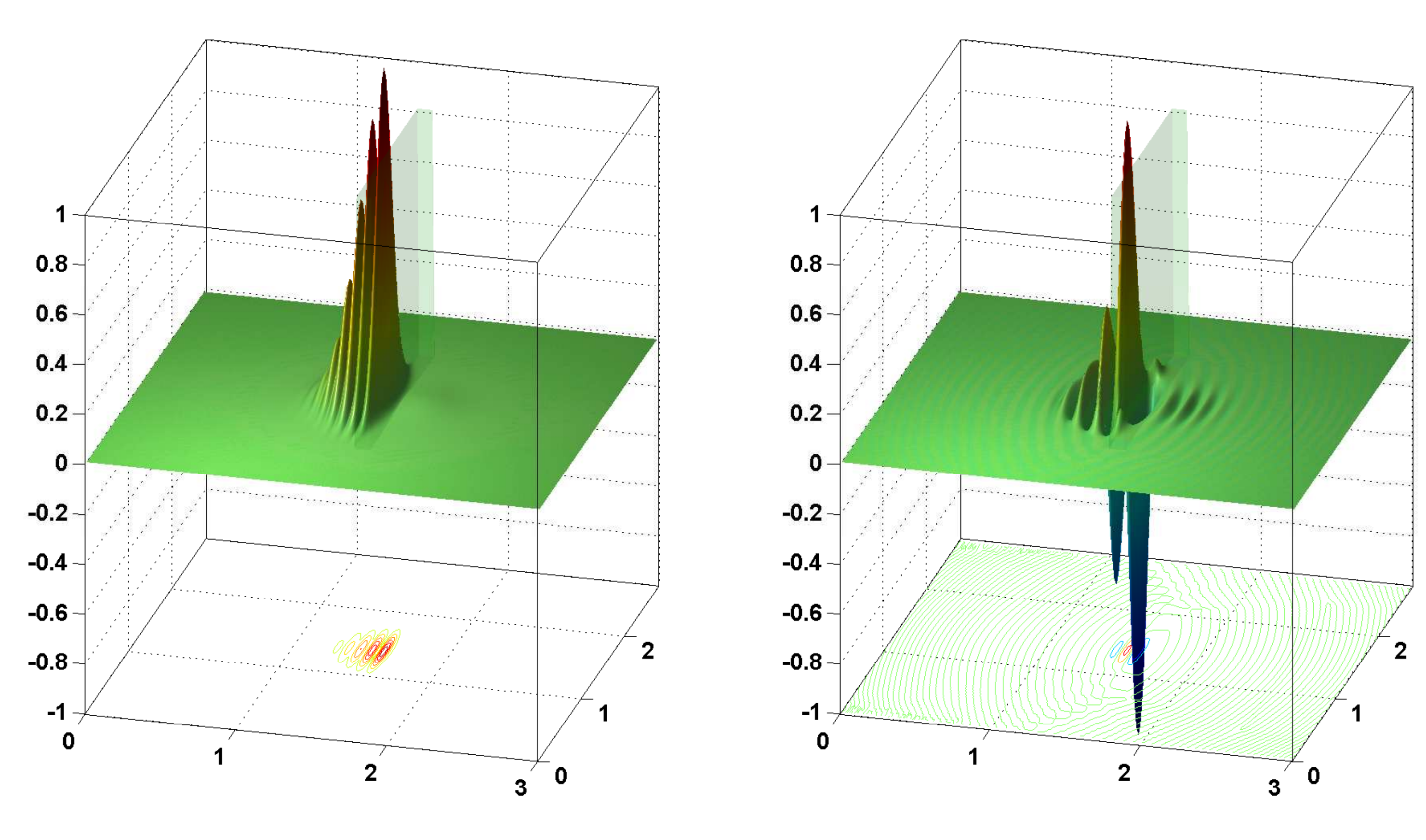}
    \vspace{0cm}\\
    \centerline{\small{$m=180$}}\\
    \includegraphics[scale=0.25]{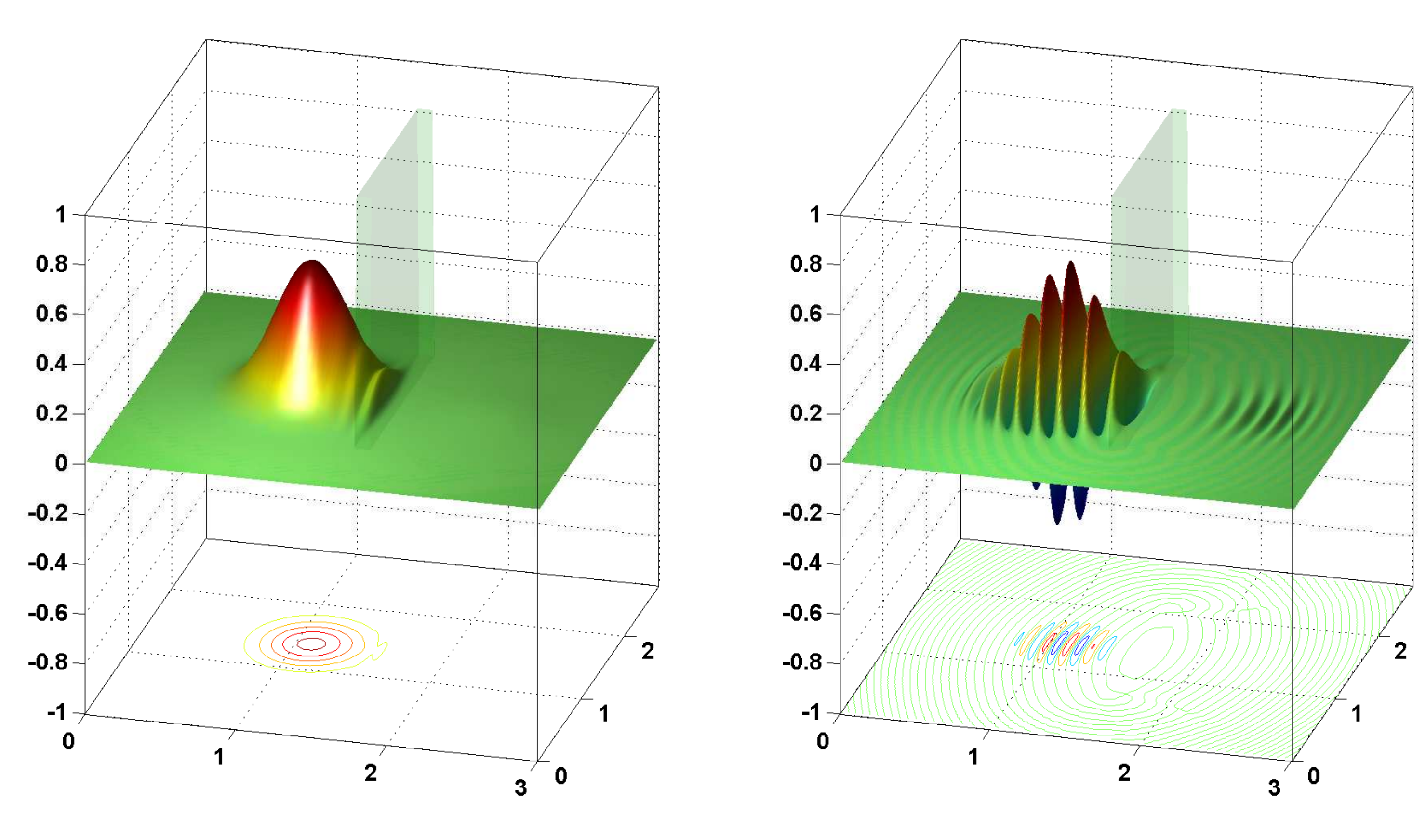}
    \vspace{0cm}\\
    \centerline{\small{$m=300$}}\\
\end{multicols}
\begin{multicols}{2}
    \includegraphics[scale=0.25]{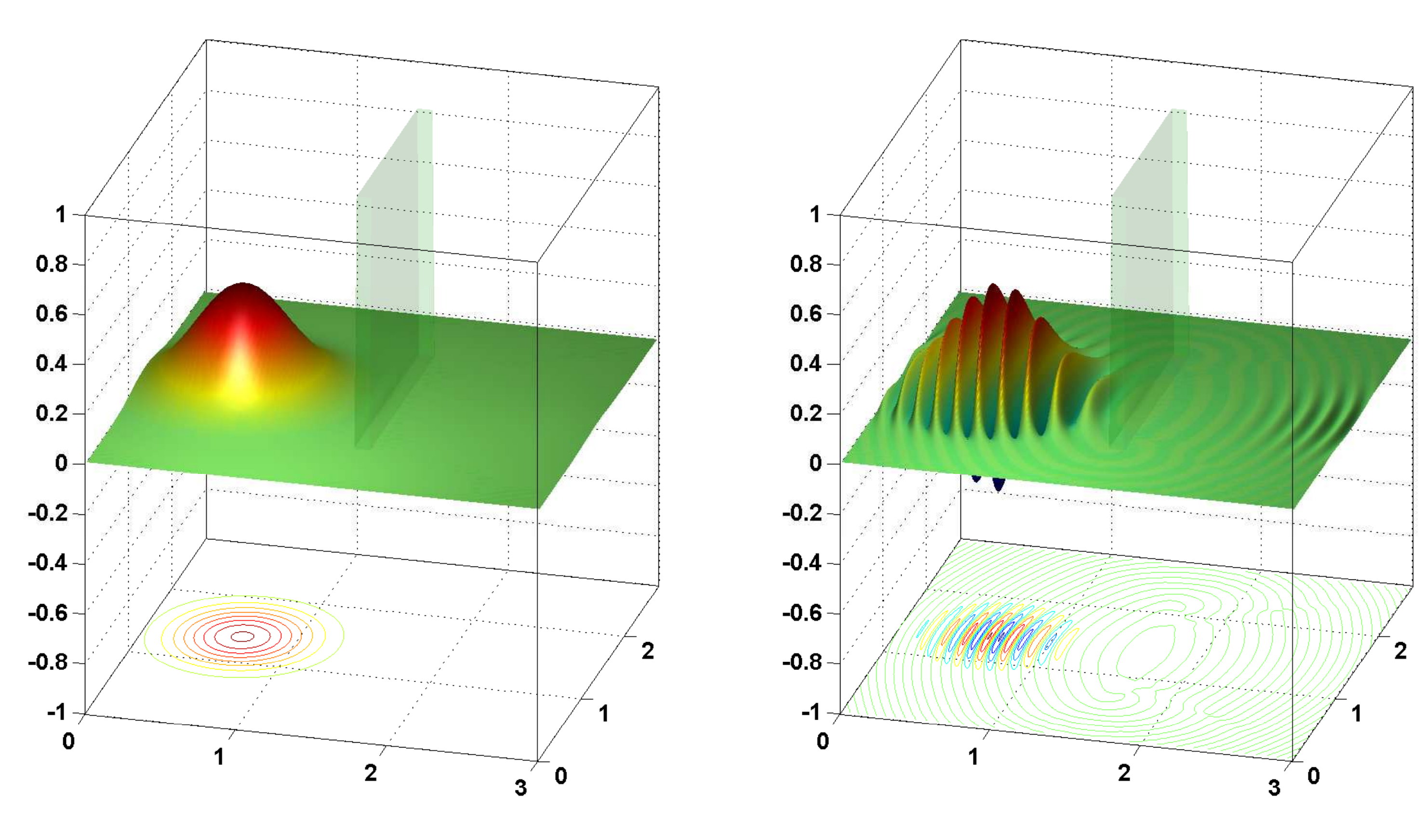}
    \vspace{0cm}\\
    \centerline{\small{$m=420$}}\\
    \includegraphics[scale=0.25]{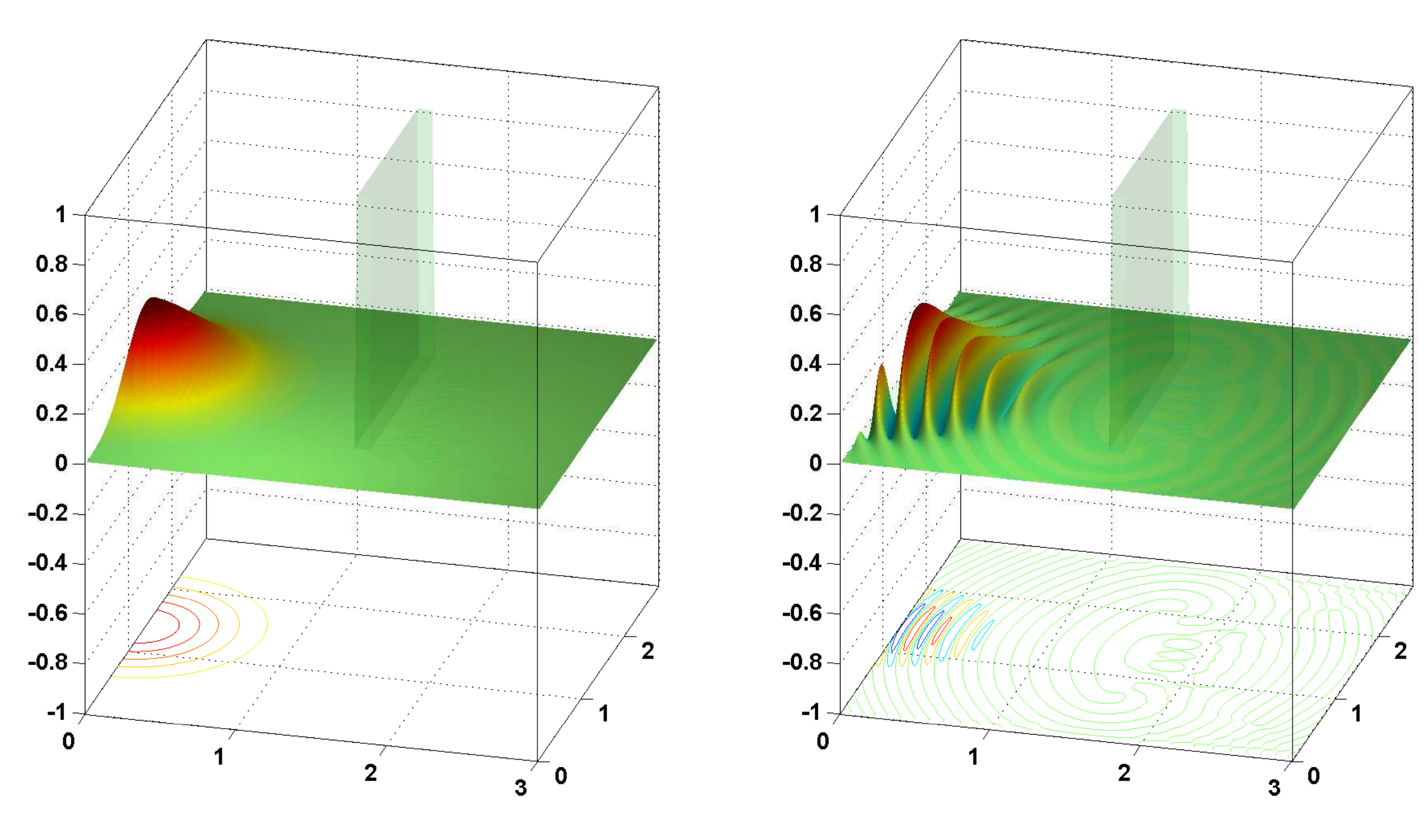}
    \vspace{0cm}\\
    \centerline{\small{$m=600$}}\\
\end{multicols}
\caption{\small{Example B ($Q=4000)$. The modulus and the real part of the numerical solution $\Psi^m$
for $(J,K,M)=(1200,64,600)$}}
\label{SSP:EX22a:B:Q=4000:Solution}
\end{figure}
%
\par To check the approximate solution in this case, we compute how the numerical solution changes when any of $J$, $K$ or $M$ increases twice, see Table \ref{SSP:tab:EX22a:4000}, where the corresponding absolute and relative errors in $C$ and $L_2$ norms are given.
The relative errors $E_{C,\,{\rm rel}}$ and $E_{{L^2},\,{\rm rel}}$
are defined as the maximal in time relative $C$ and $L_2$ mesh errors in space (in joint nodes).
One can see that all the errors are small enough.
\begin{table}\centering{
    \begin {tabular}{r<{\pgfplotstableresetcolortbloverhangright }@{}l<{\pgfplotstableresetcolortbloverhangleft }r<{\pgfplotstableresetcolortbloverhangright }@{}l<{\pgfplotstableresetcolortbloverhangleft }r<{\pgfplotstableresetcolortbloverhangright }@{}l<{\pgfplotstableresetcolortbloverhangleft }r<{\pgfplotstableresetcolortbloverhangright }@{}l<{\pgfplotstableresetcolortbloverhangleft }r<{\pgfplotstableresetcolortbloverhangright }@{}l<{\pgfplotstableresetcolortbloverhangleft }r<{\pgfplotstableresetcolortbloverhangright }@{}l<{\pgfplotstableresetcolortbloverhangleft }r<{\pgfplotstableresetcolortbloverhangright }@{}l<{\pgfplotstableresetcolortbloverhangleft }}%
\toprule \multicolumn {2}{c}{$J$}&\multicolumn {2}{c}{$K$}&\multicolumn {2}{c}{$M$}&\multicolumn {2}{c}{$E_{C}$}&\multicolumn {2}{c}{$E_{L^2}$}&\multicolumn {2}{c}{$E_{C,\,{\rm rel}}$}&\multicolumn {2}{c}{$E_{L^2,\,{\rm rel}}$}\\\midrule %
$2\,400$&$$&$64$&$$&$600$&$$&$8$&$.61\cdot 10^{-3}$&$5$&$.29\cdot 10^{-3}$&$2$&$.48\cdot 10^{-2}$&$2$&$.90\cdot 10^{-2}$\\%
$1\,200$&$$&$128$&$$&$600$&$$&$1$&$.38\cdot 10^{-2}$&$6$&$.37\cdot 10^{-3}$&$3$&$.56\cdot 10^{-2}$&$2$&$.82\cdot 10^{-2}$\\%
$1\,200$&$$&$64$&$$&$1\,200$&$$&$1$&$.22\cdot 10^{-2}$&$5$&$.30\cdot 10^{-3}$&$2$&$.82\cdot 10^{-2}$&$2$&$.33\cdot 10^{-2}$\\\bottomrule %
\end {tabular}%

\caption{Example B ($Q=4000$). The change in numerical solution when $J$, $K$ or $M$ increases twice}
\label{SSP:tab:EX22a:4000}}
\end{table}
\par We emphasize that on all Figures \ref{SSP:EX22a:B:Solution}, \ref{SSP:EX22a:B:Q=1000:Solution} and \ref{SSP:EX22a:B:Q=4000:Solution}, the last two graphs exhibit complete absence of the spurious reflections from the artificial left and right boundaries due to exploiting of the discrete TBCs there.
\smallskip\par \textbf{Example C}. We also treat the case of a very short barrier with $(c,d)=(\frac{Y}{2}-\frac{Y}{2^5},\frac{Y}{2}+\frac{Y}{2^5})=(1.3125,1.4875)$
and once again having the height $Q=1500$; this barrier looks like a column.
The numerical solution $\Psi^{m}$ is represented on Figure \ref{SSP:EX22a:C:Solution}, for the time moments $t_m=m\tau$, $m=180, 240, 300$ and $360$, together with the normalized barrier. We use the mesh with $(J,K,M)=(1200,512,600)$, i.e. for the same $J$ and $M$ as on the above figures but for notably larger $K$.
Now in contrast to the previous examples, the transmitted part of the wave package is separated into two pieces.
\begin{figure}[ht]
\begin{multicols}{2}
    \includegraphics[scale=0.25]{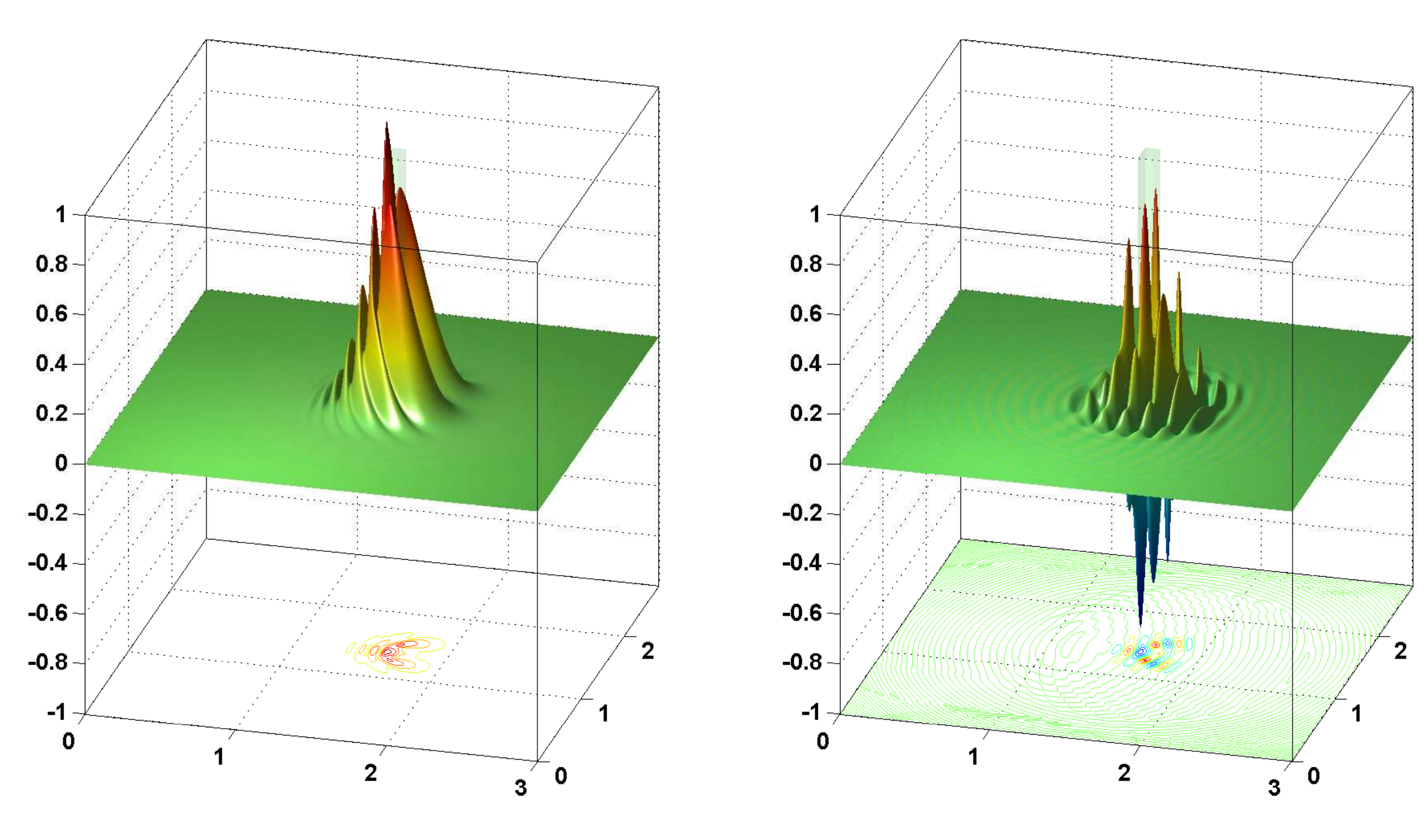}
    \vspace{0cm}\\
    \centerline{\small{$m=180$}}\\
    \includegraphics[scale=0.25]{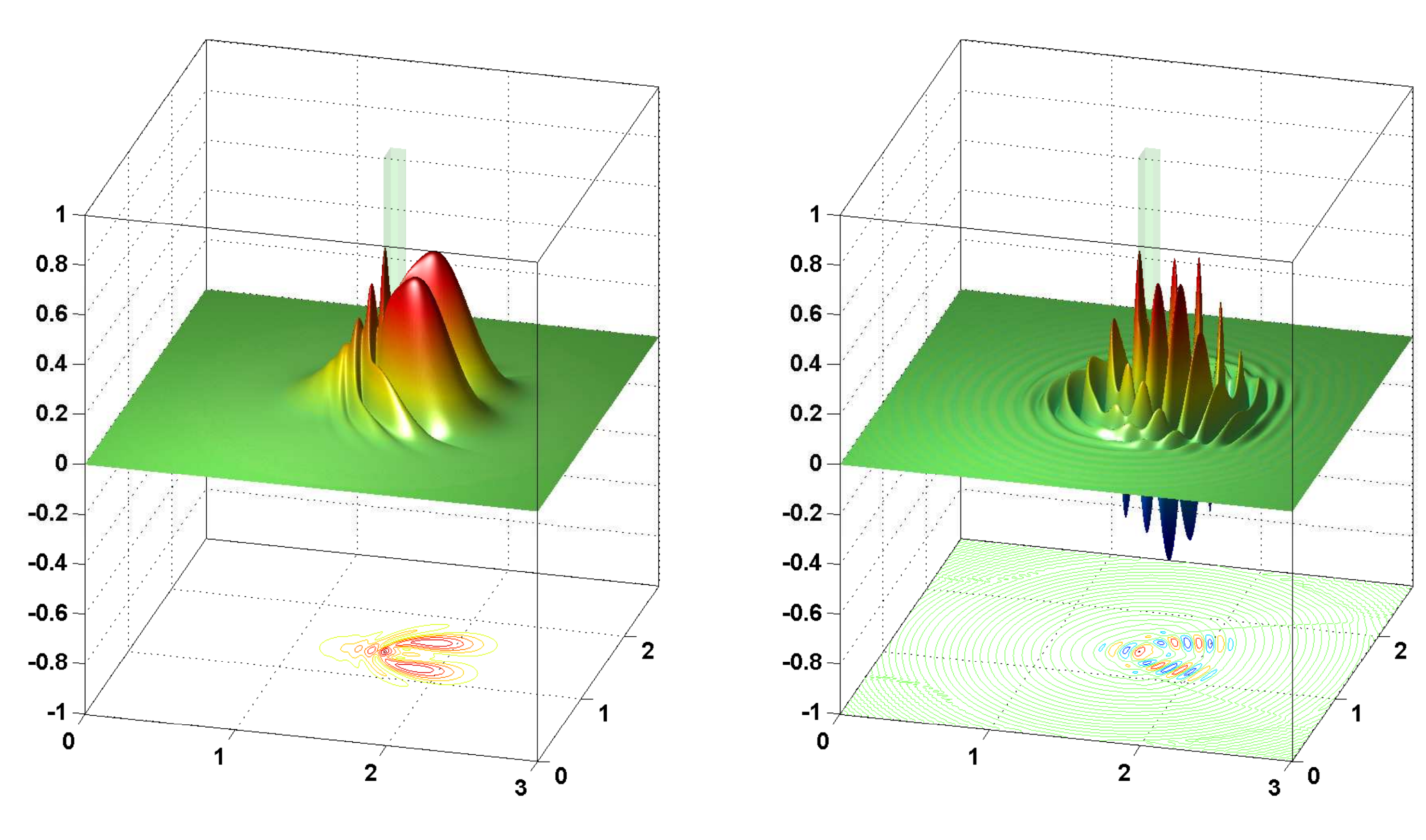}
    \vspace{0cm}\\
    \centerline{\small{$m=240$}}\\
\end{multicols}
\begin{multicols}{2}
    \includegraphics[scale=0.25]{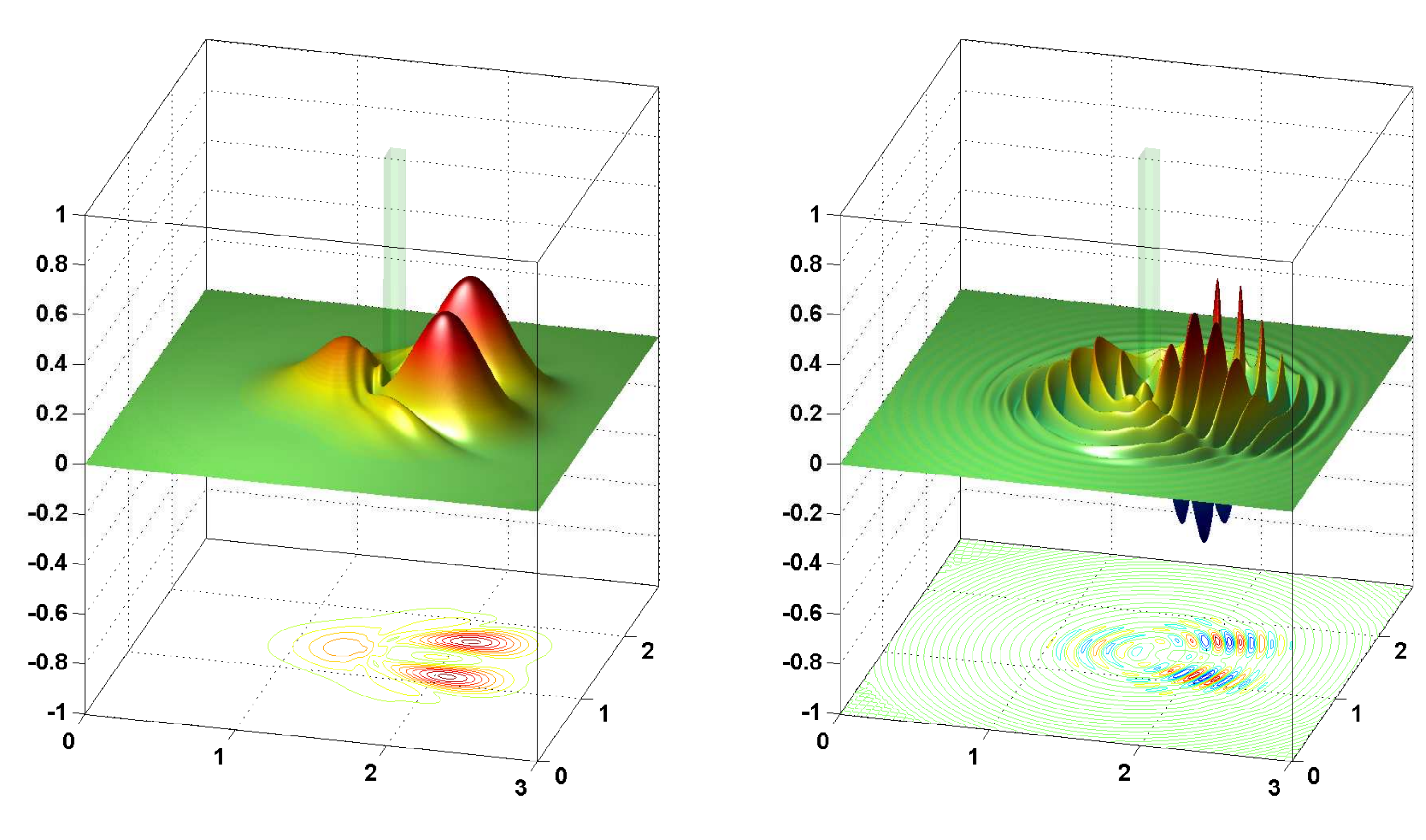}
    \vspace{0cm}\\
    \centerline{\small{$m=300$}}\\
    \includegraphics[scale=0.25]{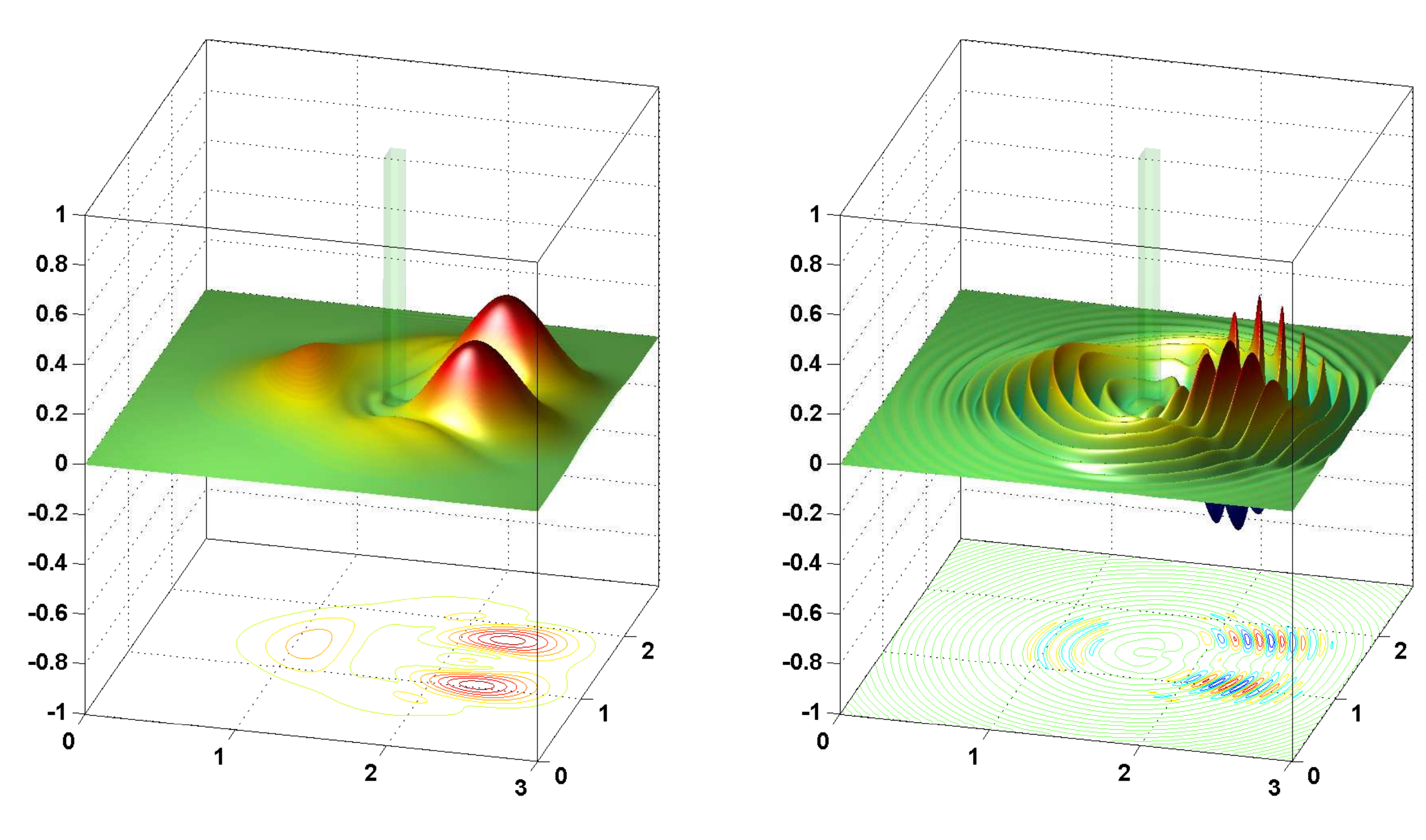}
    \vspace{0cm}\\
    \centerline{\small{$m=360$}}\\
\end{multicols}
\caption{\small{Example C. The modulus and the real part of the numerical solution $\Psi^m$
for $(J,K,M)=(1200,512,600)$}}
\label{SSP:EX22a:C:Solution}
\end{figure}
%
\par To check the approximate solution in this case, we compute how the numerical solution changes when $J$, $K$ or $M$ increases twice, see Table \ref{SSP:tab:EX22a:C}, where the corresponding absolute and relative errors are given. One can see that all of them are small enough once again.
\begin{table}\centering{
    \begin {tabular}{r<{\pgfplotstableresetcolortbloverhangright }@{}l<{\pgfplotstableresetcolortbloverhangleft }r<{\pgfplotstableresetcolortbloverhangright }@{}l<{\pgfplotstableresetcolortbloverhangleft }r<{\pgfplotstableresetcolortbloverhangright }@{}l<{\pgfplotstableresetcolortbloverhangleft }r<{\pgfplotstableresetcolortbloverhangright }@{}l<{\pgfplotstableresetcolortbloverhangleft }r<{\pgfplotstableresetcolortbloverhangright }@{}l<{\pgfplotstableresetcolortbloverhangleft }r<{\pgfplotstableresetcolortbloverhangright }@{}l<{\pgfplotstableresetcolortbloverhangleft }r<{\pgfplotstableresetcolortbloverhangright }@{}l<{\pgfplotstableresetcolortbloverhangleft }}%
\toprule \multicolumn {2}{c}{$J$}&\multicolumn {2}{c}{$K$}&\multicolumn {2}{c}{$M$}&\multicolumn {2}{c}{$E_{C}$}&\multicolumn {2}{c}{$E_{L^2}$}&\multicolumn {2}{c}{$E_{C,\,{\rm rel}}$}&\multicolumn {2}{c}{$E_{L^2,\,{\rm rel}}$}\\\midrule %
$2\,400$&$$&$512$&$$&$600$&$$&$1$&$.06\cdot 10^{-2}$&$5$&$.26\cdot 10^{-3}$&$3$&$.16\cdot 10^{-2}$&$2$&$.32\cdot 10^{-2}$\\%
$1\,200$&$$&$1\,024$&$$&$600$&$$&$9$&$.64\cdot 10^{-3}$&$6$&$.70\cdot 10^{-3}$&$3$&$.92\cdot 10^{-2}$&$4$&$.38\cdot 10^{-2}$\\%
$1\,200$&$$&$512$&$$&$1\,200$&$$&$8$&$.22\cdot 10^{-3}$&$4$&$.67\cdot 10^{-3}$&$2$&$.66\cdot 10^{-2}$&$2$&$.06\cdot 10^{-2}$\\\bottomrule %
\end {tabular}%

\caption{Example C.  The change in numerical solution when $J$, $K$ or $M$ increases twice}
\label{SSP:tab:EX22a:C}}
\end{table}
\smallskip\par  We call attention to the essentially more complicated behavior of the real part of the solution compared to its modulus in all Examples A-C. Comparing $C$ and $L^2$ errors, one can see that though $C$ errors are mainly larger, their behavior is rather similar that is not so obvious a priori taking into account the oscillatory type of the solutions in space and time.
\par In general, the above practical error analysis indicates the good error properties of the splitting in potential scheme.
\par Finally, note that clearly both the rectangular form of the barrier and the specific choice of the initial function are inessential to apply efficiently the splitting method.
\smallskip\par
\textbf{Acknowledgments}
\smallskip\par
The paper has been initiated during the visit of A. Zlotnik in summer 2011 to the the D\'epar\-te\-ment de Physique Th\'eorique et Appliqu\'ee, CEA/DAM/DIF Ile de France (Arpajon), which he thanks for hospitality.
The study is carried out by him within The National Research University Higher School of Economics' Academic Fund Program in 2012-2013, research grant No. 11-01-0051.
Both A. Zlotnik and I. Zlotnik are supported by the Ministry of Education and Science of the Russian Federation (contract 14.B37.21.0864) and the Russian Foundation for Basic Research, project 12-01-90008-Bel.

\end{document}